\newtheorem{theorem}{Theorem}[section]
\newtheorem{lemma}[theorem]{Lemma}
\newtheorem{proposition}[theorem]{Proposition}
\newtheorem{corollary}[theorem]{Corollary}
\theoremstyle{definition}
\newtheorem{example}[theorem]{Example}
\theoremstyle{remark}
\newtheorem{remark}[theorem]{Remark}
\numberwithin{equation}{section}
\title{HURWITZ SERIES RINGS SATISFYING A ZERO DIVISOR
PROPERTY}
\author{Behrooz Mosallaei, Moein Afrouzmehr, Danial Abshari and Sepideh Farivar}
\address{The Klipsch School of Electrical and Computer Engineering, New Mexico State University,
Las Cruces, NM 88003 USA}
\email{behrooz@nmsu.edu}
\address{Department of Economics, Applied Statistics, and International Business, New Mexico State University,
Las Cruces, NM 88003 USA}
\email{moeinafr@nmsu.edu}
\address{Software and Information Systems Department,
University of North Carolina at Charlotte, Charlotte, NC 28223 USA}
\email{dabshari@uncc.edu}
\address{Department of Computer Science, University of Nevada, Las Vegas, 4505 S. Maryland Parkway, Las Vegas, NV, 89154-4030, USA}
\email{Farivar@unlv.nevada.edu}
\begin{document}

\maketitle
\begin{abstract}
    In this paper, we study zero divisors in the Hurwitz series rings and the Hurwitz polynomial rings over general non-commutative rings. We first construct Armendariz rings that are not Armendariz of the Hurwitz series type and find various properties of (the Hurwitz series) Armendariz rings. We show that for a semiprime Armendariz of the Hurwitz series type (so reduced) ring $R$ with $a.c.c.$ on annihilator ideals, $HR$ (the Hurwitz series ring with coefficients over $R$) has finitely many minimal prime ideals, say $B_1, \ldots, B_m$ such that $B_1 \cdot \ldots \cdot B_m = 0$ and $B_i = HA_i$ for some minimal prime ideal $A_i$ of $R$ for all $i$, where $A_1, \ldots, A_m$ are all minimal prime ideals of $R$. Additionally, we construct various types of (the Hurwitz series) Armendariz rings and demonstrate that the polynomial ring extension preserves the Armendarizness of the Hurwitz series as the Armendarizness.
\end{abstract}

\section{Introduction}\label{s1}

In an earlier paper \cite{keigher1975}, Keigher studied the properties of formal power series rings and their categorical properties. In the sequel paper \cite{keigher1997}, Keigher introduced an extension of the formal power series ring called the ring of the Hurwitz series and examined its structure and applications, especially in the study of differential algebra. The Hurwitz series rings are similar to formal power series rings, except that binomial coefficients are introduced at each term in the product. While there are many studies of these rings over a commutative ring, very little is known about them over a non-commutative ring. In the present paper, we study the Hurwitz series over a non-commutative ring with identity and examine its structure and properties.

The definition of the Hurwitz series originally allowed the ring to be non-commutative, but most authors restrict them to be commutative; therefore, all of the basic definitions are still true under the restriction that the ring is non-commutative.

The Hurwitz series over any ring $ R $ (not necessarily commutative) is defined as a function from $ N  \to  R $ and denoted by $ HR $. The elements of the ring $ HR $ of the Hurwitz series over $ R $ are sequences of the form $ a = (a_n) = (a_0, a_1, a_2, \ldots) $ where $ a_n \in R $ for each $ n \in \mathbb{N} $. An element in $ HR $ can be thought of as a function from $ \mathbb{N} $ to $ R $. Two elements $ (a_n) $ and $ (b_n) $ in $ HR $ are equal if they are equal as functions from $ \mathbb{N} $ to $ R $, i.e., if $ a_n = b_n $ for all $ n \in \mathbb{N} $. The element $ a_m \in R $ will be called the $ m $-th term of $ (a_n) $. Addition in $ HR $ is defined termwise, so that $ (a_n) + (b_n) = (c_n) $, where $ c_n = a_n + b_n $ for all $ n \in \mathbb{N} $.

If you know the formal power series $ \sum_{n=0}^{\infty} a_n\,t^n \in R[[t]] $ and its coefficients $ (a_n) $, then multiplying in $ HR $ is like multiplying a formal power series, but binomial coefficients are added to each term in the product in the following way: $ C_i^n = \dfrac{n!}{i!\,(n-1)!} $. The Hurwitz product of $ (a_n) $ and $ (b_n) $ is given by $ (a_n) \cdot (b_n) = (c_n) $, where
\[
c_n = \sum_{k=0}^n C_i^n a_k\,b_{n_-k}
\]
Hence,
\[
(a_0, a_1, a_2, a_3, \ldots) \cdot (b_0, b_1, b_2, b_3, \ldots) = 
\]
\[
(a_0b_0, a_0b_1 + a_1b_0, a_0b_2 + 2a_1b_1 + a_2b_0, a_0b_3 + 3a_1b_2 + 3a_2b_1 + a_3b_0, \ldots).
\]

With these two operations, the Hurwitz series ring $ HR $ is a non-commutative ring with identity containing $ R $ The zero in $ HR $ is $ 0 = (0, 0, 0, \ldots) $, the sequence with all terms 0, and the identity is $ 1 = (1, 0, 0, \ldots) $, the sequence with the 0th term 1 and nth term 0 for all $ n \geq 1 $.

A ring $ R $ is called Armendariz \cite{rege1997}, if whenever polynomials $ f(x) = a_0 + a_1x + \ldots + a_nx^n $, $ g(x) = b_0 + b_1x + \ldots + b_mx^m \in R[x] $ satisfy $ f(x)g(x) = 0 $, then $ a_i b_j = 0 $ for each $ i, j $. Throughout this note, all rings are associated with identity. The symbols $ hR $ and $ HR $ stand for the Hurwitz polynomial ring and the Hurwitz series ring, respectively.

A few results on the properties of the Hurwitz series ring are introduced by V. Nourozi et al. \cite{vahid} in Nilpotent Property, and V. Nourozi et al. introduced the McCoy property in \cite{vahid1}. Also, M. Ahmadi et al. introduced the Nilradical property in \cite{vahid2}, and the Armendariz property was introduced by V. Nourozi et al. in \cite{vahid3,vahid4}.

In this paper, we apply the concept of the Armendariz ring to the Hurwitz series rings over general non-commutative rings. $A$ ring $R$ is called Armendariz ring of the Hurwitz series type, if for every series $f= (a_0, a_1, a_2,...)$ and $g = (b_0, b_1, b_2,...)$ in
$HR$, $fg = 0$ if and only if $a_ib_i = 0$ for all $i, j$. Armendariz rings of the Hurwitz
series types are Armendariz by definition; in Section 2, we will show that this direction is nontrivial. A ring is called reduced if it has no nonzero nilpotent
elements. Reduced rings with $char(R) = 0$ are Armendariz rings of the Hurwitz
series type by Lemma 2.3 below, but there may be many examples of nonreduced Armendariz rings of the Hurwitz series types, as we see in Section 3 below. It is obvious that subrings of (the Hurwitz series) Armendariz rings are also (the Hurwitz series)
Armendariz, we will use these facts freely without mention.

\section{Armendariz ring of the Hurwitz series type}\label{s2}

Given a ring $R$, we denote the right (resp. left) annihilator over $R$ by $r_R(-)$ (resp. $l_R(-)$), i.e., $r_R(S) = \{a \in R; Sa = 0\}$ and $l_R(S) = \{b \in R; bS = 0\}$ for a subset $S$ of $R$. If $S$ is a singleton, say $S = \{a\}$, we use $r_R(a)$ (resp. $l_R(a)$) in place of $r_R(\{a\})$ (resp. $l_R(\{a\})$).

Recall that Armendariz rings of the Hurwitz series type are Armendariz; however, the converse need not hold, as shown in the following.
\begin{example}
    Let $F$ be a field. Huh et al. [\cite{huh2002}, Example 14] say that $A = F[a, b, c]$ is the free algebra of polynomials with zero constant terms in noncommuting indeterminates $a, b$ over $F$. They also say that $I$ is an ideal of $F + A$ that is made up of $cc, ac, crc$ with $r \in A$. Next, define $R = F + A/I$. Identify $f$ with $f + I$ for $f \in F + A$ for simplicity. Consider the equality
$
(a - abx)(c + bcx + b^2cx^2 + \ldots + b^ncx^n + \ldots) = a(1 - bx)(1 + bx + b^2x^2 + \ldots + bnx^n + \ldots)c = ac = 0
$
with $1 - bx$ and $1 +bx + b^2x^2+... +b^nx^n + ...$ in $HR$. Since $abc \neq 0$, $R$ is not an Armendariz ring of the Hurwitz series type. But we can prove that $R$ is an Armendariz ring.
\end{example}

A ring is called $Abelian$ if each idempotent is central. Armendariz rings are Abelian by the proof of Anderson and Camillo [~\cite{anderson1998}, Theorem 6]. For a semiprime right Goldie ring $R$, we know that $R$ is Armendariz if and only if it is Armendariz of Huwitz series type if and only if Huh et al. reduce it. [\cite{huh2002}, Corollary 13]: and we also have the same equivalences for von Neumann regular rings by Goodearl [\cite{goodearl1979}, Theorem 3.2] since Armendariz rings are abelian. So one may conjecture that $R$ is Armendariz if and only if $R$ is Armendariz of the Hurwitz series type when $R$ is $\pi$-regular (or strongly $\pi$-regular). However, the following erases the possibility
\begin{example}
    Let $ F $ be a finite field, and $ R $ be the ring in Example 2.1 over $ F $. Then $ R $ is not Armendariz of the Hurwitz series type but Armendariz by the arguments in Example 2.1. It is easy to check that $ R $ is a locally finite ring (i.e., each finite subset generates a finite multiplicative semigroup). So for any $ r \in R $, there is a positive integer $ k $ such that $ r^k $ is an idempotent by the proof of Huh et al. [~\cite{huh2002}, Proposition 16], implying that $ R $ is strongly $\pi$-regular (hence $\pi$-regular).
\end{example}

Due to Bell \cite{bell1970}, a right (or left) ideal $ I $ of a ring $ R $ is said to have the insertion-of-factors-principle (simply, IFP) if $ ab \in I $ implies $ aRb \subseteq I $ for $ a, b \in R $. So we shall call a ring $ R $ an IFP ring if the zero ideal of $ R $ has the IFP. Shin \cite{shin1973} used the term SI for the IFP. Computations show that reduced rings are IFP, and IFP rings are Abelian. Subrings of IFP rings are also IFP, obviously.

Note that a ring $ R $ is IFP if and only if any right annihilator is an ideal if and only if any left annihilator is an ideal if and only if $ ab = 0 $ implies $ aRb = 0 $ for $ a, b \in R $ Shin [~\cite{shin1973}, Lemma 1.2)].

Given a ring $ R $, we use $ N_*(R) $, $ N^*(R) $, $ N_0(R) $, and $ N(R) $ to represent the prime radical (i.e., lower nilradical), the upper nilradical (i.e., the sum of all nil ideals), the Wedderburn radical (i.e., the sum of all nilpotent ideals), and the set of all nilpotent elements of $ R $, respectively. Note $ N_0(R) \subseteq N_*(R) \subseteq N^*(R) \subseteq N(R) $.

\begin{lemma}
    [\cite{vahid}, corollary 2.6] Every reduced ring with $ \text{char}(R) = 0 $ is an Armendariz ring of the Hurwitz series type.
\end{lemma}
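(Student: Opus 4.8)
The plan is to prove the nontrivial implication: if $f=(a_n)$ and $g=(b_n)$ satisfy $fg=0$ in $HR$, then $a_ib_j=0$ for all $i,j$. The converse is immediate, since every coefficient $c_n=\sum_{k=0}^{n}\binom{n}{k}a_kb_{n-k}$ of $fg$ is an integer combination of the products $a_kb_{n-k}$. Throughout I would exploit two standard features of a reduced ring $R$: it is reversible, i.e. $uv=0$ implies $vu=0$ (because $(vu)^2=v(uv)u=0$ forces $vu=0$ by reducedness), and it has no nonzero square-zero elements, i.e. $x^2=0$ implies $x=0$. I would also use that $\mathrm{char}(R)=0$ lets me cancel a nonzero integer coefficient, treating $\binom{n}{k}$ as a non-zero-divisor of $R$; this is the one place the characteristic hypothesis is essential.

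The main induction is on the total degree $s$: I claim $a_ib_j=0$ whenever $i+j=s$, for every $s\ge 0$. The relation $fg=0$ gives $c_s=\sum_{k=0}^{s}\binom{s}{k}a_kb_{s-k}=0$ for each $s$. The base case $s=0$ is just $c_0=a_0b_0=0$. So I assume, as inductive hypothesis, that $a_ib_j=0$ (hence also $b_ja_i=0$ by reversibility) for all pairs with $i+j<s$.

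Inside level $s$ I would run a secondary, finite induction peeling the terms of $c_s$ off one at a time, from $k=0$ upward. Suppose $a_jb_{s-j}=0$ has already been established for $j=0,\dots,k-1$; then $c_s$ collapses to $\sum_{j=k}^{s}\binom{s}{j}a_jb_{s-j}=0$. Multiplying this identity on the right by $a_k$ and examining each surviving term: for $j>k$ we have $k+(s-j)<s$, so the inductive hypothesis gives $a_kb_{s-j}=0$, hence $b_{s-j}a_k=0$ by reversibility, and the term $a_jb_{s-j}a_k=a_j(b_{s-j}a_k)$ vanishes. Only the $j=k$ term survives, leaving $\binom{s}{k}\,a_kb_{s-k}a_k=0$. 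Cancelling the nonzero integer $\binom{s}{k}$ (here is where $\mathrm{char}(R)=0$ enters) yields $a_kb_{s-k}a_k=0$, whence $(b_{s-k}a_k)^2=b_{s-k}(a_kb_{s-k}a_k)=0$, and reducedness forces $b_{s-k}a_k=0$, i.e. $a_kb_{s-k}=0$. This advances the secondary induction through $k=s$ and completes the step.

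The reversibility and square-zero manipulations are routine; the single genuine obstacle, and the reason the characteristic-zero hypothesis cannot be dropped, is the cancellation $\binom{s}{k}a_kb_{s-k}a_k=0\Rightarrow a_kb_{s-k}a_k=0$. Without it the argument stalls precisely at the interior coefficients, since the extreme term $k=0$ always survives with coefficient $\binom{s}{0}=1$ and needs no cancellation. This is consistent with the fact that additive torsion in $R$ can destroy the Hurwitz--Armendariz property even when $R$ is reduced, so what the proof really uses is that the positive integers act on $R$ without zero-divisors.
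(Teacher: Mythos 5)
The paper does not actually prove this lemma --- it is quoted verbatim from \cite{vahid} (Corollary 2.6) --- so your argument is the only proof on the table, and it is essentially correct and well organized: the double induction (outer on the total degree $s=i+j$, inner peeling off the terms of $c_s=\sum_{k}\binom{s}{k}a_kb_{s-k}$ one at a time by right-multiplying by $a_k$) together with reversibility and the absence of square-zero elements is exactly the standard way to establish this kind of statement for reduced rings, and each step checks out. The one point that deserves scrutiny is the cancellation $\binom{s}{k}a_kb_{s-k}a_k=0\Rightarrow a_kb_{s-k}a_k=0$, which you correctly isolate as the crux. Under the usual definition, $\mathrm{char}(R)=0$ only says that $n\cdot 1\neq 0$ for $n>0$; it does not make positive integers regular on $R$. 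The ring $R=\Z\times\Z/2\Z$ is reduced of characteristic $0$, yet with $f=g=(0,(0,1),0,\dots)$ one gets $fg=0$ because $c_2=2(0,1)(0,1)=0$, while $a_1b_1=(0,1)\neq 0$; so under the strict reading the lemma itself is false, and what is really needed (and what \cite{vahid} presumably intends by ``$\mathrm{char}(R)=0$'') is that the additive group of $R$ is torsion-free. You say as much in your closing sentence --- ``what the proof really uses is that the positive integers act on $R$ without zero-divisors'' --- so this is not a gap in your reasoning but a mismatch between the hypothesis as literally stated and the hypothesis your (correct) proof consumes; it would be worth making the torsion-free assumption explicit rather than attributing it to $\mathrm{char}(R)=0$.
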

\begin{proposition}
    Armendariz rings of the Hurwitz series type are IFP.
\end{proposition}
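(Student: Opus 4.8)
The plan is to verify the annihilator form of IFP recalled just above the statement, namely that it suffices to show $ab=0$ forces $arb=0$ for every $r\in R$. So I fix $a,b,r\in R$ with $ab=0$ and aim to produce, out of these data, two Hurwitz series whose product vanishes and one of whose coefficient relations reads $arb=0$. The crucial idea is to use the binomial coefficients in the Hurwitz product to our advantage by taking the ``exponential'' sequences $f=(a,ar,ar^2,\ldots)$, with general term $f_k=ar^k$, and $g=(b,-rb,r^2b,\ldots)$, with general term $g_j=(-1)^j r^j b$. These are the Hurwitz-series incarnations of $a\,e^{rt}$ and $e^{-rt}b$, and the point of the argument is that their Hurwitz product should telescope down to $ab$.

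First I would compute $fg=(c_n)$ termwise. Since powers of $r$ commute and the signs are central, $f_k g_{n-k}=a r^k\,(-1)^{n-k} r^{n-k} b=(-1)^{n-k}\,a r^{n} b$, so that
\[
c_n=\left(\sum_{k=0}^{n}\binom{n}{k}(-1)^{n-k}\right)a r^{n} b=(1-1)^{n}\,a r^{n} b .
\]
For $n=0$ this equals $ab=0$ by hypothesis, and for every $n\ge 1$ the alternating binomial sum $(1-1)^n$ is zero; hence $fg=0$ in $HR$.

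Next I would invoke the assumption that $R$ is Armendariz of the Hurwitz series type: from $fg=0$ we obtain $f_ig_j=0$ for all $i,j$. Selecting the single pair $(i,j)=(0,1)$ gives $f_0 g_1=a(-rb)=-arb=0$, that is, $arb=0$. Since $r$ was arbitrary, $aRb=0$, which is exactly the IFP condition, and the proof is complete.

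I expect the only real obstacle to be discovering the construction rather than carrying it out: once the two series are in hand, the alternating binomial identity is precisely what makes $a\,e^{rt}\cdot e^{-rt}b$ collapse to $ab$, and everything else is a one-line computation together with an application of the hypothesis. It is worth emphasizing that the analogous polynomial pair $a(1+rx)$ and $(1-rx)b$ does \emph{not} multiply to zero, so the argument genuinely relies on the binomial weights built into the Hurwitz product; this is consistent with the known fact that ordinary Armendariz rings need not be IFP, and it is exactly the extra structure that makes the Hurwitz-series version strong enough to force IFP.
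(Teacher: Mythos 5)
Your proof is correct and follows essentially the same strategy as the paper's: sandwich a unit of $HR$ and its inverse between $a$ and $b$ so that the Hurwitz product collapses to $ab=0$, then read off $arb=0$ from the coefficient condition supplied by the Armendariz-of-Hurwitz-series-type hypothesis. The only difference is the choice of unit --- the paper uses $1-rx$ together with its Hurwitz inverse $(1,\,r,\,2!\,r^2,\,3!\,r^3,\ldots)$, whereas you use the exponential pair $(1,r,r^2,\ldots)$ and $(1,-r,r^2,\ldots)$ verified via the alternating binomial identity $(1-1)^n=0$; both work equally well.
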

 
\begin{proof}
    Let $R$ be an Armendariz ring of Hurwitz series type and suppose that $ab = 0$ for $a,b \in R$. For any $r \in R$ we have $a(1-rx)(1+1!r2x+2!+r^2x^2 +3!r^3x^3 +\cdots+ n!r^nx^n +\cdots)b = ab = 0$, where $1-rx$ and $1+1!rx+2!r^2x^2 +3!r^3x^3 +\cdots+ n!r^nx^n + \cdots $are in $HR$. Thus, the Armendariz ring of the Hurwitz series type $R$ implies $aRb = 0$, proving that $R$ is $IFP$.\end{proof}

\begin{proposition}
    If $R$ is an Armendariz ring of the Hurwitz series type then $HR$ is $IFP$.
    \end{proposition}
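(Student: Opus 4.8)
The plan is to push the IFP question on $HR$ down to the coefficient level and then combine the two facts we already have: that $R$ itself is IFP (Proposition~2.4) and that $R$ is Armendariz of the Hurwitz series type. Suppose $f = (a_n)$ and $g = (b_n)$ in $HR$ satisfy $fg = 0$. By the Armendariz-of-the-Hurwitz-series-type hypothesis applied to $fg = 0$, this forces $a_i b_j = 0$ for \emph{all} $i, j \in \mathbb{N}$, not merely for the index pairs feeding a single coefficient. This uniform vanishing is the crucial upgrade that the Hurwitz--Armendariz condition buys us over an arbitrary relation $fg = 0$, and it is what makes the insertion step below work.

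Next I would fix an arbitrary $h = (c_n) \in HR$ and compute the coefficients of the triple product $fhg$. Expanding the Hurwitz multiplication twice and using associativity, the $n$-th term of $(fh)g$ is the finite sum
\[
w_n = \sum_{i+j+q=n} \binom{n}{i+j}\binom{i+j}{i}\, a_i c_j b_q,
\]
so every summand carries a factor of the form $a_i c_j b_q$ with $a_i$ on the far left and $b_q$ on the far right. Because $a_i b_q = 0$ and $R$ is IFP, we have $a_i R b_q = 0$, hence $a_i c_j b_q = 0$ for every intermediate coefficient $c_j \in R$. The integer binomial weights are irrelevant, since they multiply an element that is already $0$; thus every $w_n$ vanishes and $fhg = 0$. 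As $h$ was arbitrary, $f\,(HR)\,g = 0$, which is precisely the IFP condition for $HR$.

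The only point requiring genuine care is the bookkeeping of the nested Hurwitz products: one must check that after expanding $(fh)g$ every surviving term really has the shape $a_i\,(\,\cdot\,)\,b_q$ with the left factor drawn from $f$ and the right factor drawn from $g$, so that the IFP insertion $a_i R b_q = 0$ can be applied. This is immediate once the double sum is written with $c_j$ sandwiched between $a_i$ and $b_q$ as above. I do not expect any real obstacle here: the characteristic of $R$ plays no role, because all the binomial coefficients act on ring elements that are already zero, and no convergence or degree issues intervene since each Hurwitz coefficient is a finite sum. The proof is therefore essentially a reduction to Proposition~2.4 together with the defining property of the Hurwitz--Armendariz condition.
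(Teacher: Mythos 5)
Your proof is correct and follows essentially the same route as the paper: reduce $fg=0$ to $a_ib_j=0$ via the Hurwitz--Armendariz hypothesis, invoke Proposition~2.4 to get that $R$ is IFP and hence $a_iRb_j=0$, and conclude $f\,(HR)\,g=0$. The only difference is that you write out the coefficient expansion of the triple product explicitly, which the paper leaves implicit.
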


\begin{proof}
    If $R$ is an Armendariz ring of the Hurwitz series type, then it is $IFP$ by the result of Proposition 2.4. Let $f = \sum_{i=0}^{\infty} a_i\,X^i , g = \sum_{j=0}^{\infty} b_j\,X^j \in HR$ such that $fg = 0$. Then $a_ib_j = 0$ for all $i,j$ since $R$ is armendariz of theHurwitz series type, but $R$ is $IFP$ and so $a_iRb_j = 0$ for all $i,j$. Consequently, we have $fHRg = 0$, proving that $HR$ is $IFP$.
    \end{proof}
\begin{proposition} \hspace{3cm}
    \begin{itemize}
        \item[(1)] [\cite{shin1973}, Theorem 1.5] If $ R $ is an IFP ring, then $ N_*(R) = N(R) $.
        \item[(2)] If $ R $ is an Armendariz ring, then $ N_0(R) = N_{ast}(R) = N^*(R) $.
        \item[(3)] If $ R $ is an Armendariz ring of the Hurwitz series type, then $ N_0(R) = N_*(R) = N^*(R) = N(R) $.
    \end{itemize}
\end{proposition}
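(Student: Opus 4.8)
The plan is to treat the three parts in a dependent order: part (1) is quoted from Shin, so nothing is required there, and I would derive part (3) from parts (1) and (2) rather than proving it from scratch. Indeed, a ring that is Armendariz of the Hurwitz series type is Armendariz (by the definition recalled in Section 1) and is IFP by Proposition 2.4. Hence part (2) gives $N_0(R) = N_*(R) = N^*(R)$, while part (1) gives $N_*(R) = N(R)$; chaining these equalities yields $N_0(R) = N_*(R) = N^*(R) = N(R)$, which is exactly part (3). So the entire weight of the proposition rests on part (2).

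For part (2) I would start from the universal inclusions $N_0(R) \subseteq N_*(R) \subseteq N^*(R)$ recorded after the definition of the radicals, which hold in any ring. Consequently it suffices to prove the single reverse inclusion $N^*(R) \subseteq N_0(R)$, since that collapses the whole chain. The cleanest route is to show that the upper nilradical $N^*(R)$ is in fact nilpotent: a nilpotent ideal lies in the Wedderburn radical $N_0(R)$ by definition, so nilpotency of $N^*(R)$ immediately gives $N^*(R) \subseteq N_0(R)$ and forces $N_0(R) = N_*(R) = N^*(R)$. I would not expect to reach $N(R)$ here: since an Armendariz ring need not be IFP, $N(R)$ can be strictly larger than $N^*(R)$, which is precisely why the extra IFP hypothesis is needed in part (3).

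The key lemma, and the step I expect to be the main obstacle, is therefore that \emph{every nil ideal of an Armendariz ring is nilpotent} (equivalently, that $N^*(R)$ is nilpotent). The Armendariz hypothesis has to be used in an essential way, because semiprimeness of $R/N_*(R)$ by itself does not rule out nonzero nil ideals. I would exploit the Armendariz condition coefficientwise: given a nil ideal $I$, I would form polynomials with coefficients drawn from $I$ whose product vanishes, and then use the defining property of Armendariz rings to turn the vanishing of the convolution sums into the vanishing of the individual coefficient products, thereby upgrading the pointwise nilpotency of the elements of $I$ to a uniform nilpotency of $I$ as an ideal. A complementary and possibly shorter sub-route for the inclusion $N^*(R) \subseteq N_*(R)$ alone is to show that a semiprime Armendariz ring is reduced; applying this to the semiprime ring $R/N_*(R)$ would kill the image of every nil ideal and give $N_*(R) = N^*(R)$ directly, after which only the nilpotency needed for $N_0(R) = N_*(R)$ would remain.

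Throughout I may also invoke that the polynomial extension $R[x]$ of an Armendariz ring is again Armendariz, together with Amitsur's description of the nil structure of $R[x]$, to transport the nilpotency statement between $R$ and $R[x]$; this is where I anticipate the technical bookkeeping to concentrate. The remaining inclusions are formal, so once the nilpotency of $N^*(R)$ is secured the proposition follows.
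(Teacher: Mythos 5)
Your reduction of (3) to (1), (2) and Proposition 2.4 matches the paper, and you are right that in (2) only the inclusion $N^*(R)\subseteq N_0(R)$ needs to be established. The gap is in your key lemma. You propose to prove that $N^*(R)$ is itself nilpotent and assert that this is ``equivalent'' to $N^*(R)\subseteq N_0(R)$; it is not. The Wedderburn radical $N_0(R)$ is the \emph{sum} of all nilpotent ideals and need not be nilpotent, so the inclusion only requires that every element of $N^*(R)$ lie in \emph{some} nilpotent ideal --- a strictly weaker statement. Worse, your lemma is false for Armendariz rings: each $k[x]/(x^n)$ is Armendariz (Proposition 3.2), an arbitrary direct product of Armendariz rings is Armendariz since the polynomial condition is checked coordinatewise, and in $R=\prod_{n\ge 2}k[x]/(x^n)$ the ideal of finitely supported tuples with nilpotent entries is a nil ideal of unbounded nilpotency index, hence not nilpotent. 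So no amount of coefficientwise bookkeeping (or appeal to Amitsur) can deliver the lemma as you state it. Your fallback sub-route is also unsupported: the paper proves ``semiprime $\Rightarrow$ reduced'' only under the stronger Hurwitz-series-type hypothesis (Corollary 2.9, which itself uses IFP via Proposition 2.4), and even if it held for plain Armendariz rings it would only give $N_*(R)=N^*(R)$, leaving the $N_0$ equality untouched.

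What the paper actually does for (2) is elementwise. First it extracts the concrete consequence of the Armendariz hypothesis that you gesture at but never pin down: if $ab=0$ and $c^n=0$, then applying the Armendariz condition to the vanishing product $\bigl(a-acx\bigr)\bigl(b+cbx+\cdots+c^{\,n-1}bx^{\,n-1}\bigr)=ab-ac^{\,n}bx^{\,n}=0$ in $R[x]$ yields $acb=0$; hence $ab=0$ forces $aSb=0$ for any set $S$ of nilpotent elements. Then, for a single $d\in N^*(R)$ with $d^m=0$, an induction on $m$ using this fact (and that $RdR\subseteq N^*(R)$ consists of nilpotents) gives $(RdR)^{2m-1}=0$, so $d\in RdR\subseteq N_0(R)$. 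The nilpotency index $2m-1$ depends on $d$, which is precisely why one obtains only the elementwise containment and not the nilpotency of all of $N^*(R)$ --- and precisely why that is all one needs.
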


\begin{proof} \hspace{3cm}
    \begin{itemize}
        \item[(1)] We provide a proof simpler than that of Shin (\cite{shin1973}, $Theorem 1.5)$. Let $ R $ be an IFP ring, and take $ a \in N(R) $ with $ a^n = 0 $ for some positive integer $ n $. Then, since $ R $ is IFP, we have $ aR_1aR_2a\dots aR_{n1}a = 0 $ with $ R_i = R $ for $ i = 1, 2, \ldots, n1 $, hence $ a $ is strongly nilpotent.
        \item[(2)] Suppose that $ ab = 0 $ and $ ac^n\,b = 0 $ for $ a, b, c \in R $ and some integer $ n \geq 1 $. Then we have $ acb = 0 $ from 
    $a(1 - cx)(1 + 1!\,cx + \ldots + (n - 1)!\,c^{n-1}\,x^{n-1})b = 0$ Because $ R $ is Armendariz, entailing $ aSb = 0 $ for $ S \subseteq N(R) $. Let $ d \in N^*(R) $ with $ d^m = 0 $, and consider the powers of $ RdR $. If $ m = 2 $, then $ (RdR)^{22-1} = RdRdRdR = 0 $ since $ RdR \subseteq N^*(R) $. If $ m = 3 $, then $ Rd^2RdRdR = 0 $, and $ (RdR)^{23-1} = RdRdRdRdRdR = 0 $ since $ RdR \subseteq N^*(R) $. Inductively, we get $ (RdR)^{2m-1} = 0 $, causing $ d \in N_0(R) $.        \item[(3)] This result comes from $(i), (ii)$, and Proposition 2.4.
    \end{itemize}    
\end{proof}

An ideal $ I $ of a ring $ R $ is called completely prime if $ R/I $ is a domain, while it is called $completely$ $semiprime$ if $ R/I $ is reduced.
\begin{remark}
    \hspace{3cm}
    \begin{itemize}
        \item[(1)] Armendariz rings of the Hurwitz series type need not be reduced, as shown by examples in Section 3.
        \item[(2)] IFP rings need not be Armendariz (and thus not be Armendariz of the Hurwitz series type) by the following examples:
        \begin{itemize}
             \item[\*] Rege and Chhawchharia (\cite{rege1997}, Example 3.2), Example 3.2, provides a commutative ring (and thus IFP) that is not Armendariz.
        \end{itemize}
        \item[(3)] The converses of Proposition 2.6(2,3) need not hold, as can be seen by the 2-by-2 upper triangular matrix ring over a reduced ring.
        \item[(4)] Shin showed that every minimal prime ideal of a ring $ R $ is completely prime if and only if $ N_*(R) = N(R)$ (\cite{shin1973}, Proposition 1.11). Thus, Proposition 2.4 and Proposition 2.6 $(i)$ imply that if $ P $ is a minimal prime ideal of an Armendariz ring of the Hurwitz series type $ R $, then $ R/P $ is a domain (so Armendariz of the Hurwitz series type). However, every prime factor ring of Armendariz rings (Hurwitz series) need not be Armendariz (the Hurwitz series), as shown by Goodearl and Warfield (\cite{goodearl1989}, Exercise 2A). Let $ R $ be the ring of quaternions with integer coefficients; then $ R $ is a domain and thus Armendariz (the Hurwitz series). For any odd prime integer $ q $, the factor ring $R/qR$ is isomorphic to the $2 \times 2$ matrix ring over the field of integers modulo $q$, as computed in Goodearl and Warfield (\cite{goodearl1989}, Exercise 2A). However, $R/qR$ is not Abelian and thus not Armendariz.
    \end{itemize}
\end{remark}

According to Huh et al., Armendariz rings do not necessarily need to be IFP. (\cite{huh2002}, Example 14). However, one may conjecture that IFP Armendariz rings are Armendariz of the Hurwitz series type. Nevertheless, there are commutative Armendariz rings that are not Armendariz of the Hurwitz series type, as demonstrated in the following.

\begin{example}
    Let $F$ be a field of characteristic $\neq 2$, and $S = F[A]$ be the polynomial ring generated by the commuting indeterminates $A = \{a_i\}_{i=0}^\infty$ over $F$. Let $I$ be the ideal of $S$ generated by $\{a_ia_ja_k \mid a_i,a_j,$ and $a_k$ are in $A\}$, and put $T = S/I$. Let $d_n = \sum_{i+j=n}a_i\,a_j$ and $J$ be the ideal of $I$ generated by $\{d_n \mid n = 0,1,\ldots\}$. The remainder of the construction is the same as in Example 2.1. Then $R$ is a commutative ring, and it is Armendariz but not Armendariz of Hurwitz series type by the computations of Example 2.1.
\end{example}

\begin{corollary}
    Let $R$ be an Armendariz ring of the Hurwitz series type. Then $R$ is semiprime if and only if $R$ is reduced.
\end{corollary}

\begin{proof}
    It suffices to prove the necessity. Since $R$ is IFP by Proposition 2.4, the prime radical of $R$ contains all nilpotent elements in $R$ by Proposition 2.3(1). Thus, $R$ is reduced since it is semiprime by the condition.
\end{proof}

Hirano \cite{hirano2002} observed relations between annihilators in a ring $R$ and annihilators in $R[x]$. In this paper, we also extend Hirano's results for polynomial rings to the
situation of the Hurwitz series rings. Given a ring $R$, we define $r Ann_R(2^R)$ = $\{r_R(U) \mid U \subseteq R\}$, $r Ann_{HR}(2^{HR})$ = $\{r_{HR}(V) \mid V \subseteq {HR}\}$,$\quad lAnn_R(2^R) = \{l_R(U) \mid U \subseteq {R}\}.$ Given a the Hurwitz series $f \in {HR}$, let $C_f$ denote the set of all coefficients of $f$, and for a subset $V$ of $HR$, let $C_V$ denote the set $U_{f \in v}C_f$.

\begin{remark}
    Note that $r_{HR}(V)R = r_R(V) = r_R(C_v)$ for $V \subseteq HR$, so we can define a map $\Psi: rAnn_{HR}(2^{HR}) \to rAnn_{R}(2^R)$ by $\Psi(I) = I\cap R$ for each $J \in rAnn_{HR}(2HR)$. It is easily shown that $\Psi$ is surjective.
\end{remark}

Let $U \subseteq R$. It is easy to check $H(r_R(U)) \subseteq r_{HR}(U)$. Conversely, letting $Uf = 0$ with $f \in HR$, then $Ua_i = 0$ (i.e., $a_i \in r_R(U)$) for all $i$, and thus $f \in H(r_R(U))$, hence we have $H(r_R(U)) = r_{HR}(U)$. So we can define a map $\Phi : rAnn_{R}(2^R) \to rAnn_{HR}(2^{HR})$ by $\Phi(J) = HJ$ for each $J \in rAnn_{R}(2^R)$. It is trivial that $\Phi$ is injective.

When given rings are Armendariz of the Hurwitz series type, $\Phi$ is surjective (if and only if $\Psi$ is injective), as can be seen in the following result. In this situation, $\Phi$ and $\Psi$ are inverses of each other.

\begin{proposition}
    Given a ring $R$ the following conditions are equivalent:
\end{proposition}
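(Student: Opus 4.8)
The plan is to prove the equivalence by showing that the Armendariz of the Hurwitz series type condition is precisely what upgrades the already-injective map $\Phi$ to a surjection, so that $\Phi$ and $\Psi$ become mutually inverse bijections between $rAnn_R(2^R)$ and $rAnn_{HR}(2^{HR})$. The relations $\Phi(J)=HJ$ and $\Psi(I)=I\cap R$ give $\Psi\circ\Phi=\mathrm{id}$ unconditionally (this is exactly why $\Phi$ is injective and $\Psi$ is surjective, as already noted), and once $\Phi$ is surjective the identity $\Phi\circ\Psi=\mathrm{id}$ follows formally: for any $I=\Phi(J)$ one has $\Phi\Psi(I)=\Phi\Psi\Phi(J)=\Phi(J)=I$. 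Hence all the listed bijectivity/injectivity/surjectivity conditions on $\Phi$ and $\Psi$ collapse to the single assertion that $\Phi$ is surjective, and the whole argument reduces to proving the identity $r_{HR}(V)=H\bigl(r_R(C_V)\bigr)$ for every $V\subseteq HR$.

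First I would record the easy inclusion, which holds over any ring: if $g\in H(r_R(C_V))$ then every coefficient $b_j$ of $g$ annihilates every coefficient of every $f=(a_i)\in V$, so each term of the Hurwitz product $(fg)_n=\sum_{k}C_k^{n}a_k b_{n-k}$ vanishes, giving $fg=0$ for all $f\in V$ and thus $g\in r_{HR}(V)$; therefore $H(r_R(C_V))\subseteq r_{HR}(V)$ always. The reverse inclusion is where the type hypothesis enters. Assuming $R$ is Armendariz of the Hurwitz series type, if $g\in r_{HR}(V)$ then $fg=0$ for each $f\in V$ forces $a_i b_j=0$ for all coefficients, so every $b_j$ lies in $r_R(C_V)$ and $g\in H(r_R(C_V))$. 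This yields $r_{HR}(V)=H(r_R(C_V))=\Phi\bigl(r_R(C_V)\bigr)$, proving $\Phi$ surjective, hence bijective with inverse $\Psi$.

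For the converse I would assume $\Phi$ is surjective (equivalently $\Psi$ injective, equivalently $\Phi,\Psi$ mutually inverse) and take arbitrary $f,g\in HR$ with $fg=0$. Then $g\in I:=r_{HR}(\{f\})$, which lies in $rAnn_{HR}(2^{HR})$. Applying $\Phi\circ\Psi=\mathrm{id}$ to $I$, and using Remark 2.10 to identify $\Psi(I)=I\cap R=r_R(C_f)$, I get $H\bigl(r_R(C_f)\bigr)=\Phi(\Psi(I))=I=r_{HR}(\{f\})$. Since $g\in r_{HR}(\{f\})=H(r_R(C_f))$, every coefficient $b_j$ of $g$ annihilates $C_f$, i.e. $a_i b_j=0$ for all $i,j$, which is exactly the Armendariz of the Hurwitz series type property. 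Closing the cycle of implications then completes the proof, and the corresponding left-annihilator statements would follow by the symmetric argument.

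I expect the main obstacle to lie in the reverse inclusion of the forward direction, where $g\in r_{HR}(V)$ must be pushed down to the coefficientwise vanishing $a_i b_j=0$: this is the only place the Hurwitz-type hypothesis is genuinely used, and one must check that the binomial coefficients $C_k^{n}$ in the Hurwitz product do not obstruct the passage between $fg=0$ and the vanishing of all coefficient products (the type definition is framed precisely to guarantee this equivalence). The remaining bookkeeping — that $\Psi\circ\Phi=\mathrm{id}$ holds unconditionally, that surjectivity of $\Phi$ promotes to bijectivity, and the identification $\Psi(r_{HR}(V))=r_R(C_V)$ from Remark 2.10 — is routine and rests only on facts already assembled in the excerpt.
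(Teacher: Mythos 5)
Your treatment of the annihilator-map conditions is correct and follows essentially the same route as the paper: the key identity $r_{HR}(V)=H\bigl(r_R(C_V)\bigr)$ under the Hurwitz-type hypothesis (the paper's argument for (1) $\Rightarrow$ (3), modeled on Hirano), the converse by pulling $r_{HR}(f)$ back to some $r_R(F)$ and reading off $a_ib_j=0$ coefficientwise, and the reduction of (4), (5), (6) to formal bookkeeping about $\Phi$ and $\Psi$ being mutually inverse. Your explicit observation that $\Psi\circ\Phi=\mathrm{id}$ holds unconditionally and that surjectivity of $\Phi$ then forces $\Phi\circ\Psi=\mathrm{id}$ is a slightly cleaner packaging of what the paper does implicitly, but it is the same proof.

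However, there is a genuine omission: the proposition has six conditions, and your proposal never addresses condition (2), the $n$-fold product statement that $f_1\cdots f_n=0$ forces $a_1\cdots a_n=0$ for arbitrary coefficients $a_i$ of $f_i$ and arbitrary $n\ge 2$. The implication (2) $\Rightarrow$ (1) is the case $n=2$ and is trivial, but (1) $\Rightarrow$ (2) requires an argument: from $f_1(f_2\cdots f_n)=0$ the type hypothesis gives $a_1\alpha=0$ for every coefficient $\alpha$ of $f_2\cdots f_n$, hence $a_1f_2\cdots f_n=0$, and one then iterates with $a_1f_2$ in place of $f_1$ to peel off one factor at a time by induction on $n$. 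Without this piece the cycle of equivalences you close covers only five of the six conditions, so the proof as proposed is incomplete even though everything you do write down is sound.
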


    \begin{itemize}
        \item[(1)] R is Armendariz of the Hurwitz series type;
        \item[(2)] If $f_1 \ldots f_n = 0$ for $f_1, \ldots, f_n \in {HR}$, then $a_1 \ldots a_n = 0$ where $a_i$ is any coefficient of $f_i$ and $n$ is any integer $\geq 2$.
        \item[(3)] $\Phi : {\it rAnn}_{R}(2^{R}) \to {\it rAnn}_{HR}(2^{HR})$; $J \mapsto HJ$ is surjective (so bijective).
        \item[(4)] $\Phi': {\it lAnn}_{R}(2^R) \to {\it lAnn}_{HR}(2^{HR})$; $K \mapsto HK$ is surjective (so bijective).
        \item[(5)] $\Psi: {\it rAnn}_{HR}(2^{HR}) \to {\it rAnn}_{R}(2^{R})$; $I \mapsto I \cap R$ is injective (so bijective).
        \item[(6)] $\Psi': {\it lAnn}_{HR}(2^{HR}) \to {\it lAnn}_{R}(2^{R})$; $H \mapsto H\cap R$ is injective (so bijective).
    \end{itemize}

\begin{proof}
    The proof of (2) $\Rightarrow$ (1) is trivial. (1) $\Rightarrow$ (2): Let $R$ be Armendariz of the Hurwitz series type and assume that $f_1 \ldots f_n = 0$ for $f_1, \ldots, f_n \in HR$ with $a_i$ being any coefficient of $f_i$ and $n$ being any integer $\geq 2$. From $f_1(f_2 \ldots f_n) = 0$, we get $a_1\alpha = 0$ for any coefficient $\alpha \in f_2 \ldots f_n$ inducing $a_1f_2 \ldots f_n,$ including $a_1a_2f_3 \ldots f_3 = 0.$ Inductive computation yields the result.
\end{proof}

Next, we use a similar method to Hirano (\cite{hirano2002}, Proposition 3.1) for the proof of (1) $\Rightarrow$ (3).

(1) $\Rightarrow$ (3) Let $f = \sum_{i=0}^{\infty} a_iX^i \in r^{HR}(V) \in rAnn_{HR}(2^{HR})$ and suppose that $R$ is Armendariz of the Hurwitz series type. Then we have $b_ja_i = 0$ for all $i$ and $j$ where $\sum_{j=0}^{\infty} b_jX^j$ is any Hurwitz series in $V$. So $a_i \in r_R(C_V)$ and $f \in H(r_R(C_V))$, hence $r_{HR}(V) = H(r_R(C_V)) = \Phi(r_R(C_V))$, implying that $\Phi$ is surjective.

(3) $\Rightarrow$ (1) Suppose that $\Phi$ is surjective and that $fg = 0$ for $f = \sum_{i=0}^{\infty} a_iX^i $ and $g = \sum_{j=0}^{\infty} b_jX^j \in {HR}$. Then for $r_{HR}(f)$, there exists $r_R(F) \in rAnn_R(2^R)$ such that $H(r_R(F)) = \Phi(r_R(F)) = r_{HR}(f)$. So $g \in r_{HR}(f)$ implies that each $b_j$ is contained in $r_R(F)$, hence $fb_j = 0$ and $a_ib_j = 0$ for all $i$ and $j$.

(1) $\Leftrightarrow$ (4) is shown by the left version of the preceding proofs. (1) $\Leftrightarrow$ (5) and (1) $\Leftrightarrow$ (6) are immediate consequences of (1) $\Leftrightarrow$ (3) and (1) $\Leftrightarrow$ (4), respectively.

Due to Kaplansky \cite{kaplansky1968}, a ring $R$ is called Baer if the right annihilator of every nonempty subset of $R$ is generated by an idempotent. The concept of the Baer ring is left-right symmetric by Kaplansky (\cite{kaplansky1968}, Theorem 3). A ring $R$ is called {\it right (left) p.p.} if each principal right (left) ideal of $R$ is projective. A ring $R$ is called {\it p.p.} if it is a right and left {\it p.p.} ring. Baer rings are clearly {\it p.p.} rings.

\begin{lemma}
    Suppose that a ring $R$ is abelian. Then we have the following:
        \begin{itemize}
        \item[(1)] Every idempotent of $hR$ is in $R$ and $hR$ is abelian.
        \item[(2)] Every idempotent of $HR$ is in $R$ and $HR$ is abelian.
        \end{itemize}
\end{lemma}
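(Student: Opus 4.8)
The plan is to prove part (2) directly by a coefficient-by-coefficient analysis of the Hurwitz product, and then obtain part (1) for free by realizing $hR$ as a subring of $HR$. So let $e=(e_0,e_1,e_2,\ldots)\in HR$ satisfy $e^2=e$. Reading off the $0$-th term of $e\cdot e$ via the Hurwitz product formula gives $e_0^2=e_0$, so $e_0$ is an idempotent of $R$; since $R$ is abelian, $e_0$ is central in $R$. I would then record the general relation $e_n=\sum_{k=0}^{n}\binom{n}{k}e_k e_{n-k}$ that must hold for every $n$.

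The heart of the argument is an induction on $n\ge 1$ showing $e_n=0$. Assume $e_1=\cdots=e_{n-1}=0$ (vacuous when $n=1$). In the sum for the $n$-th term, every index $k$ with $1\le k\le n-1$ contributes a factor $e_k=0$, so only $k=0$ and $k=n$ survive, leaving $e_n = e_0 e_n + e_n e_0$. Because $e_0$ is central this reads $e_n = 2e_0 e_n$. I would then multiply on the left by $e_0$ and use $e_0^2=e_0$ to get $e_0 e_n = 2e_0 e_n$, hence $e_0 e_n = 0$; feeding this back into $e_n=2e_0 e_n$ yields $e_n=0$. The point worth stressing is that this deduction never divides by $2$, so it is valid in every characteristic. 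By induction $e_n=0$ for all $n\ge 1$, i.e.\ $e=(e_0,0,0,\ldots)\in R$, which proves the first assertion of (2).

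For abelianness it remains to check that this $e$ is central in $HR$. Given any $f=(a_0,a_1,\ldots)\in HR$, the Hurwitz product formula collapses (because $e_k=0$ for $k\ge 1$) to $(ef)_n = e_0 a_n$ and $(fe)_n = a_n e_0$; since $e_0$ is central in $R$ these agree for every $n$, so $ef=fe$ and $HR$ is abelian. Finally, part (1) follows at once: $hR$ is a subring of $HR$ with the same identity and operations, so any idempotent of $hR$ is an idempotent of $HR$ and hence lies in $R$ by (2), while a subring of the abelian ring $HR$ is itself abelian.

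The only delicate point is the inductive step, and specifically the insistence on the characteristic-free cancellation $e_0 e_n=0$: a naive attempt to solve $e_n=2e_0 e_n$ by dividing by $2$ would fail over rings in which $2$ is a zero divisor, so the multiply-by-$e_0$ trick is exactly what makes the argument go through in full generality.
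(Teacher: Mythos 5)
Your proposal is correct and follows essentially the same route as the paper: read off the coefficient equations of $e^2=e$, note $e_0$ is a central idempotent, and kill each $e_n$ ($n\ge 1$) by left-multiplying the $n$-th equation by $e_0$ to get $e_0e_n=0$ and hence $e_n=0$, with part (1) following since $hR$ is a subring of $HR$. Your write-up is slightly more complete in that it phrases the step as a clean induction and explicitly verifies that $e=(e_0,0,0,\ldots)$ is central in $HR$, which the paper merely asserts.
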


\begin{proof}
    $hR$ is a sub-ring of $HR$, and so it is enough to prove (2). For $f \in HR$, assume that $f = f^2$, where $f = e_0 + e_1x + \ldots + e_nx^n + \ldots$. Then we have the system of equations
\end{proof}

\begin{equation}
    e_0^2 = e_0,
\end{equation}
\begin{equation}
    e_0e_1 + e_1e_0 = e_1,
\end{equation}
\begin{equation}
    e_0e_2 + 2e_1e_1 + e_2e_0 = e_2,
\end{equation}
$$ \vdots $$

Observing that Eq. (2.1) yields that $e_0$ is an idempotent, so it is central. If we multiply Eq. (2.2) on the left side by $e_0$, then $e_0e_1 + e_0e_1e_0 = e_0e_1$. But $e_0e_1e_0 = e_0e_1$ because $e_0$ is central. So $e_0e_1 = 0$ and so $e_1 = 0$, hence Eq. (2.3) becomes $e_0e_2 + e_2e_0 = 0 = e_2$. If we multiply Eq. (2.3) on the left side by $e_0$, then $e_0e_2 + e_0e_2e_0 = e_0e_2$. But $e_0e_2e_0 = e_0e_2$. Hence $e_0e_2 = 0$ and so $e_2 = 0$. Continuing this way, we get that $e_i = 0$ for all $i \geq 1$. Therefore, $f = e_0 \in R$, and also $HR$ is abelian.
\begin{corollary}
    Let $R$ be an Armendariz ring of Hurwitz series type. If $e = (e_0, e_1, \ldots) \in HR$ is an idempotent, then $e = e_0 \in R$.
\end{corollary}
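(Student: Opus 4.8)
The plan is to reduce the corollary to Lemma 2.12(2), whose hypothesis requires only that $R$ be abelian. So the entire task is to verify that an Armendariz ring of the Hurwitz series type is abelian, after which the conclusion is immediate from the lemma.

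First I would invoke Proposition 2.4 to conclude that $R$ is IFP, since by hypothesis $R$ is Armendariz of the Hurwitz series type. Next, recalling the observation recorded just after Bell's definition of IFP, namely that every IFP ring is abelian, I would deduce that each idempotent of $R$ is central, i.e., $R$ is abelian. (Alternatively, one could skip the IFP step and cite directly that Armendariz rings are abelian by Anderson and Camillo, as noted in the introduction; either route establishes the same fact.)

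With $R$ abelian, Lemma 2.12(2) applies verbatim and yields that every idempotent of $HR$ already lies in $R$. Applying this to the given idempotent $e = (e_0, e_1, \ldots)$ forces $e_i = 0$ for all $i \geq 1$, so that $e = e_0 \in R$, as claimed.

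I do not expect any genuine obstacle here: the corollary is a direct specialization of Lemma 2.12(2), and its only content is the short implication chain ``Armendariz of the Hurwitz series type $\Rightarrow$ IFP $\Rightarrow$ abelian,'' both links of which are already available in the excerpt. The one point worth stating cleanly in the write-up is that the hypothesis of the lemma is met, so that no separate analysis of the idempotent equations $e_0^2 = e_0$, $e_0 e_1 + e_1 e_0 = e_1$, $\ldots$ needs to be repeated.
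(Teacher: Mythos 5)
Your proposal is correct and matches the paper's intended derivation: the corollary is stated immediately after Lemma 2.12 precisely so that it follows from part (2) once one notes that an Armendariz ring of the Hurwitz series type is abelian (either via Proposition 2.4 and the recorded fact that IFP rings are abelian, or via Anderson--Camillo as cited in the introduction). No gap.
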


\begin{theorem}
    Let $R$ be an Armendariz ring of Hurwitz series type. Then $R$ is a Baer ring if and only if $HR$ is a Baer ring.
\end{theorem}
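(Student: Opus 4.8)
The plan is to reduce both implications to the annihilator correspondence of Proposition 2.11 together with the idempotent-lifting result of Corollary 2.13. Two facts are used throughout. First, since $R$ is Armendariz of the Hurwitz series type, Proposition 2.11 gives $r_{HR}(V) = H(r_R(C_V))$ for every nonempty $V \subseteq HR$, and in particular $r_{HR}(U) = H(r_R(U))$ for $U \subseteq R$ (the latter being the identity established just before Proposition 2.11). Second, $R$ is IFP by Proposition 2.4 and hence Abelian, so by Corollary 2.13 every idempotent of $HR$ lies in $R$ and is central. Since the Baer condition is left-right symmetric by Kaplansky, it suffices to treat right annihilators. The one computational observation I will need is that for a central idempotent $e \in R$, writing $\bar e = (e,0,0,\dots) \in HR$, the Hurwitz product $\bar e f$ has $n$-th coefficient $e a_n$ for $f = (a_n)$; hence the principal right ideal $\bar e\,HR$ equals $H(eR)$, and centrality of $e$ is exactly what makes the product act coefficientwise.

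For the forward direction, assume $R$ is Baer and let $V$ be a nonempty subset of $HR$. By Proposition 2.11, $r_{HR}(V) = H(r_R(C_V))$, and since $R$ is Baer, $r_R(C_V) = eR$ for an idempotent $e \in R$, which is central as $R$ is Abelian. The computational observation gives $H(eR) = \bar e\,HR$ with $\bar e$ a central idempotent of $HR$, so $r_{HR}(V) = \bar e\,HR$ is generated by an idempotent and $HR$ is Baer.

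For the converse, assume $HR$ is Baer and let $U$ be a nonempty subset of $R$, viewed inside $HR$. Then $r_{HR}(U) = \varepsilon\,HR$ for some idempotent $\varepsilon \in HR$, and by Corollary 2.13 we have $\varepsilon = e_0 \in R$, a central idempotent; thus $r_{HR}(U) = H(e_0 R)$. On the other hand $r_{HR}(U) = H(r_R(U))$, and intersecting with the copy of $R$ inside $HR$ (the constant series) yields $r_R(U) = r_{HR}(U) \cap R = e_0 R$, using that an element $a \in R$ annihilates $U$ in $HR$ exactly when it annihilates $U$ in $R$. Hence $r_R(U)$ is generated by the idempotent $e_0 \in R$ and $R$ is Baer.

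The only genuinely delicate points are the coefficientwise identities $\bar e\,HR = H(eR)$ and $H(e_0 R) \cap R = e_0 R$; both rest on the idempotents being central, which is supplied by the IFP (hence Abelian) property of Proposition 2.4 and the descent of idempotents from $HR$ to $R$ in Corollary 2.13. Everything else is a direct transcription of the annihilator bijection, so I anticipate no further obstacle.
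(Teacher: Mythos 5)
Your argument is correct and follows essentially the same route as the paper: the forward direction is the identification $r_{HR}(V) = H(r_R(C_V)) = H(eR) = \bar e\,HR$ (the paper carries out the coefficient computation inline rather than citing Proposition 2.11, but it is the same computation), and the converse via Corollary 2.13 and intersecting with $R$ is identical. The only cosmetic point is that centrality of $e$ is not actually needed for $\bar e\,HR = H(eR)$, since left multiplication by a constant sequence acts coefficientwise in $HR$ regardless.
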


\begin{proof}
    Suppose that $R$ is a Baer ring and $A$ is a subset of $HR$. Let $B$ be the set of all coefficients of all elements of $A$. Then we have $r_R(B) = e_0R$ for some idempotent $e_0 \in R$. We prove that $r_{HR}(A) = eHR$ where $e = (e_0, 0, 0, \ldots)$ is an idempotent in $HR$. We know that $eHR \subseteq r_{HR}(A)$. Next, if $g = (b_0, b_1, b_2, \ldots) \in A$, then $fg = 0$. Since $R$ is Armendariz of Hurwitz series type, $a_ib_j = 0$ for each $i, j$. Thus $b_j \in r_R(B) = e_0R$, so $b_j = e_0b_j$, which implies $g = eg$, and hence $HR$ is a Baer ring.
\end{proof}

Conversely, if $HR$ is Baer and $A \subseteq R$, then we have $r_{HR}(A) = eHR$, for some idempotent $e = (e_0, 0, 0, \ldots) \in HR$, by Corollary 2.13. Thus, we get $r_R(A) = eHR \cap R = e_0R$.

\begin{theorem}
    Let $R$ be an Armendariz ring of Hurwitz series type. Then $R$ is a p.p.-ring if and only if $hR$ is a p.p.-ring.
\end{theorem}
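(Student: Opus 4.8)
The plan is to mimic the proof of Theorem 2.14, replacing arbitrary subsets (which govern the Baer condition) by single elements (which govern the p.p. condition), and to exploit the fact that a Hurwitz \emph{polynomial} has only finitely many nonzero coefficients. Recall that a ring $S$ is right p.p. exactly when $r_S(a)$ is generated by an idempotent for every $a \in S$; since Armendariz rings are abelian, $R$ is abelian and every idempotent in sight is central, a fact I will use repeatedly.

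For the forward direction, I would suppose $R$ is p.p. and take $f = \sum_{i=0}^n a_i X^i \in hR$ with coefficient set $C_f = \{a_0,\dots,a_n\}$. Because $hR \subseteq HR$ and $R$ is Armendariz of the Hurwitz series type, $fg = 0$ for $g = \sum_j b_j X^j$ forces $a_i b_j = 0$ for all $i,j$, so that $r_{hR}(f) = h(r_R(C_f))$, where $h(r_R(C_f))$ denotes the polynomials with all coefficients in $r_R(C_f) = \bigcap_{i=0}^n r_R(a_i)$. Each $r_R(a_i) = e_i R$ for a central idempotent $e_i$, and since the $e_i$ are central the finite intersection collapses to $\bigcap_i e_i R = eR$ with $e = e_0 e_1 \cdots e_n$ again a central idempotent. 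A direct Hurwitz-product computation shows $(e,0,0,\dots)(b_0,b_1,\dots) = (eb_0, eb_1,\dots)$, from which $h(eR) = \hat{e}\,hR$ where $\hat{e} = (e,0,0,\dots) \in hR$ is an idempotent; therefore $r_{hR}(f) = \hat{e}\,hR$ and $hR$ is right p.p. The left-p.p. statement follows by the symmetric argument using $l_R$ and $l_{hR}$, so $hR$ is p.p.

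For the converse, I would assume $hR$ is p.p. and fix $a \in R$, regarded as the constant $(a,0,0,\dots) \in hR$. The same Hurwitz-product computation gives $r_{hR}(a) = h(r_R(a))$, and by hypothesis this equals $\hat{e}\,hR$ for some idempotent $\hat{e} \in hR$. By Lemma 2.12(1), $hR$ is abelian and its idempotents lie in $R$, so $\hat{e} = (e_0,0,0,\dots)$ with $e_0 \in R$ an idempotent. Intersecting with $R$ then yields $r_R(a) = r_{hR}(a) \cap R = e_0 R$, so $R$ is right p.p.; the left version is again symmetric.

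The main obstacle — and the reason the statement is about $hR$ rather than $HR$ — is precisely the step $r_R(C_f) = \bigcap_i r_R(a_i) = eR$. This requires the coefficient set $C_f$ to be \emph{finite}, so that a finite product of central idempotents suffices; for an arbitrary power series in $HR$ the coefficient set can be infinite, and an infinite intersection of idempotent-generated annihilators need not be idempotent-generated — that is exactly the stronger Baer condition handled in Theorem 2.14. Beyond this, the only points needing care are the elementary Hurwitz-product identities $\hat{e}g = (eb_0,eb_1,\dots)$ and $ag = (ab_0,ab_1,\dots)$, which make the identifications $r_{hR}(f) = h(r_R(C_f))$ and $h(eR) = \hat{e}\,hR$ precise, together with the systematic appeal to centrality of idempotents at each stage.
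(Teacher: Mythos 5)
Your proof is correct and follows essentially the same route as the paper: both directions reduce $r_{hR}(f)$ to $h\bigl(\bigcap_i r_R(a_i)\bigr)$ using the Armendariz-of-Hurwitz-series-type hypothesis and the finiteness of the coefficient set, take $e = e_0e_1\cdots e_n$, and use Corollary 2.13 (idempotents of $hR$ lie in $R$) for the converse. Your justification that $e$ is idempotent via centrality of idempotents in an abelian ring is in fact cleaner than the paper's, which mistakenly says ``since $R$ is commutative.''
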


\begin{proof}
    Suppose that $R$ is a p.p.-ring and $f \in ThR$. Let $\{a_i \mid 0 \leq i \leq n\}$ be the set of all coefficients of $f$. Then we have $r_R(a_i) = e_iR$ for some idempotent $e_i \in R$ with $0 \leq i \leq n$. Put $e = e_1e_2 \dots e_n$. Since $R$ is commutative, $e$ is an idempotent of $R$. We prove that $r_{hR}(f) = ehR$. We know that $ehR \subseteq r_{hR}(f)$. Assume that $g \in r_{hR}(f)$, so $fg = 0$ and $a_ib_j = 0$ for every $0 \leq i \leq n$ and $0 \leq j \leq m$ and for each coefficient $b_i$ of $g$. Thus $b_j = e_ib_j$ for each $0 \leq i \leq n$ and $0 \leq j \leq m$, and so $b_j = eb_j$ implies $g = eg$. Consequently, $g = eg$ and hence $g \in ehR$. Therefore, $hR$ is p.p.
\end{proof}

Conversely, assume that $hR$ is a p.p.-ring and that $a \in R$. So $r_{hR}(ahR) = ehR$ for some idempotent $e \in hR$. By Corollary 2.13, $e \in R$ is an idempotent, so $r_R(aR) = ehR \cap R = eR$, then $r_R(a) = eR$ and hence $R$ is p.p.

A ring $R$ is called left (resp. \emph {right}) {\it weakly $\pi$-regular} if for every $a \in R$ there exists a positive integer $n$, depending on $a$, such that $a^n \in (Ra^n)^2$ (resp. $a^n \in (a^nR)^2$). Any $\pi$-regular ring is both left and right weakly $\pi$-regular.

\textbf{Proposition 2.16. }{\it If $R$ is an Armendariz ring of the Hurwitz series type such that $R/N_0(R)$ is right (or left) weakly $\pi$-regular, then every prime ideal of $R$ is both maximal and completely prime.}

\textit{Proof.} Since $R$ is Armendariz of the Hurwitz series type, $N_0(R) = N_*(R) = N^*(R) = N(R)$ by Proposition 2.6(3), hence $R/N_*(R)$ is right (or left) weakly $\pi$-regular.
Every prime ideal of $R$ is maximal by Birkenmeier et al.~\cite{birkenmeier1994} (Lemma 5) and thus completely prime by Shin~\cite{shin1973} (Proposition 1.11).

\section{Example of Armendariz ring of the Hurwitz series type}\label{s3}

In this section, we study various examples of Armendariz rings of the Hurwitz series type, extending the class to the realm of nonreduced non-commutative rings. We denote the degree of a polynomial $f$ by $\deg f$.

\begin{proposition}
    A ring $R$ is Armendariz of the Hurwitz series type if and only if $hR$ is Armendariz of Hurwitz series type.
\end{proposition}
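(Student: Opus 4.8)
The plan is to prove the two implications separately; the backward implication is essentially free, and all the substance lies in the forward one.

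For the backward direction, note that $R$ embeds in $hR$ as the subring of constant Hurwitz polynomials $r\mapsto(r,0,0,\dots)$. Since the excerpt already records that subrings of rings which are Armendariz of the Hurwitz series type are again of that type, the assumption that $hR$ is Armendariz of the Hurwitz series type forces $R$ to be so as well, with no further computation.

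For the forward direction I would assume $R$ is Armendariz of the Hurwitz series type and take $F=(A_0,A_1,\dots)$ and $G=(B_0,B_1,\dots)$ in $H(hR)$ with $FG=0$, where $A_k=(a_{k,0},a_{k,1},\dots)$ and $B_l=(b_{l,0},b_{l,1},\dots)$ lie in $hR$ and so have finite support. The goal is to show $A_iB_j=0$ in $hR$ for all $i,j$ (the converse, that all $A_iB_j=0$ forces $FG=0$, is automatic since each term of $FG$ is an $R$-combination of the $A_kB_l$). I would first reduce this to the single assertion that $a_{k,i}b_{l,j}=0$ in $R$ for all four indices: once that is known, every coefficient $\sum_{p+q=m}\binom{m}{p}a_{i,p}b_{j,q}$ of the Hurwitz product $A_iB_j$ is an $R$-combination of vanishing products, whence $A_iB_j=0$. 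Writing out the $n$-th term of $FG$ and then its $m$-th inner coefficient, the hypothesis becomes: for all $n,m$, $\sum_{k+l=n}\binom{n}{k}\sum_{i+j=m}\binom{m}{i}a_{k,i}b_{l,j}=0$. In analogy with the substitution $y\mapsto x^{N}$ used by Anderson--Camillo and by Hirano to transfer the ordinary Armendariz property to polynomial extensions, I would try to repackage the two indices of each coefficient into one. Fixing the outer pair $(i,j)$ I wish to treat, I would observe that the relations controlling coefficients up to total outer degree $n=i+j$ involve only the finitely many polynomials $A_0,\dots,A_n,B_0,\dots,B_n$, whose inner degrees are bounded, say by $w$; choosing $N>2w$, I would form two Hurwitz series in $HR$ by placing $a_{k,i}$ at position $i+Nk$ and $b_{l,j}$ at position $j+Nl$, so that no coefficients collide, and then try to apply the Armendariz-of-the-Hurwitz-series-type hypothesis on $R$ to read off the pairwise products.

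The step I expect to be the genuine obstacle, and the feature with no analogue in the classical polynomial case, is the array of binomial coefficients produced by the Hurwitz product. Under the positional packing above, the product coefficient at position $(i+j)+N(k+l)$ is $\sum\binom{(i+j)+N(k+l)}{\,i+Nk\,}a_{k,i}b_{l,j}$, and these binomials need not agree with the products $\binom{n}{k}\binom{m}{i}$ occurring in $FG=0$; equivalently, collapsing the two divided-power variables $u,v$ of $H(hR)$ onto the single divided-power variable $t$ of $HR$ by a map such as $u\mapsto t,\ v\mapsto t^{[N]}$ is a genuine ring homomorphism, but it multiplies each coefficient by an integer factor $c_{N,k}\binom{i+Nk}{i}$ that can vanish in positive characteristic and so destroys the clean conclusion. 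I would therefore clear these factors by inducting on the total outer degree $n$: the base case $A_0B_0=0$ yields $a_{0,i}b_{0,j}=0$ directly from $R$ being Armendariz of the Hurwitz series type, and at each stage I would use that $HR$ is IFP (Proposition 2.5), together with the relations already secured at lower degree, to isolate the top term of the current equation and peel off the next layer of products $a_{k,i}b_{l,j}$. Making this peeling rigorous in the presence of the binomial factors, for a possibly non-reduced coefficient ring of positive characteristic, is where the proof must do its real work.
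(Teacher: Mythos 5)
Your overall strategy coincides with the paper's: the backward direction via the subring observation, and the forward direction by packing the coefficients of $F,G\in H(hR)$ into single Hurwitz series in $HR$ through degree shifts large enough to avoid collisions, exactly as in Anderson--Camillo's Theorem~2 (the paper uses the shifts $k_n=\deg f_0+\cdots+\deg f_n+\deg g_0+\cdots+\deg g_n+1$). The difference is that you stop before the proof is finished. You yourself concede that the crucial step --- passing from $FG=0$ in $H(hR)$ to the vanishing of the packed product in $HR$, or otherwise extracting $a_{k,i}b_{l,j}=0$ --- is obstructed by the mismatch between the binomial coefficients $\binom{(i+j)+N(k+l)}{i+Nk}$ produced by the Hurwitz product of the packed series and the products $\binom{n}{k}\binom{m}{i}$ occurring in the hypothesis $FG=0$, and you leave the resolution of that mismatch as an unexecuted plan (induction on outer degree plus the IFP property of $HR$ from Proposition~2.5). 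As written this is an outline with its hardest step missing rather than a proof: nothing in the proposal shows that the integer factors can actually be cleared, and in positive characteristic --- which you rightly flag --- those factors can vanish, so the proposed ``peeling'' cannot be taken for granted.

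It is only fair to add that the paper's own proof does not address this point either: it asserts that $f(T)g(T)=0$ gives $f(X)g(X)=0$ ``by the definition of the $k_n$'s,'' which is precisely the transfer you identify as non-automatic in the Hurwitz setting (in the ordinary polynomial case the coefficients of the packed product are literally a rearrangement of those of $f(T)g(T)$; here each is multiplied by a different binomial integer, e.g.\ $\binom{2N}{N}$ in place of $\binom{2}{1}$). So you have correctly located the weak point of this argument, which is to your credit --- but locating the gap is not the same as closing it, and your proposal does not supply the missing computation for the forward direction.
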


\begin{proof}
    Let $R$ be an Armendariz ring of the Hurwitz series type, and suppose that $f(T) = \sum_{i=0}^{\infty} f_iT^i$ and $g(T) = \sum_{j=0}^{\infty} g_jT^j$ are in $H(hR)$ with $f(T)g(T) = 0$, where $f_i, g_j \in hR$ and $T$ is an indeterminate over $hR$. We apply a similar proof to Anderson and Camillo (\cite{anderson1998}, Theorem 2). Set $k_n = \deg f_0 + \ldots + \deg f_n + \deg g_0 + \ldots + \deg g_n + 1$, and the degree of the zero is taken to be zero. Now define $f(X) = f_0 + f_1x^{k_1} + f_2x^{2k_2} + \ldots + f_nx^{n k_n} + \ldots$ and $g(X) = g_0 + g_1x^{k_1} + g_2x^{2k_2} + \ldots + g_nx^{n k_n} + \ldots$. Then $f(X), g(X)$ are contained in $HR$, and the set of coefficients of $f_i$'s (resp. $g_j$'s) is equal to the set of coefficients of $f(X)$ (resp. $g(X)$). From $f(T)g(T) = 0$, it gives $f(X)g(X) = 0$ by the definition of $k_n$'s. Since $R$ is Armendariz of the Hurwitz series type by hypothesis, each coefficient of $f_i$ annihilates each coefficient of $g_j$. Thus, $f_ig_j = 0$ for all $i$ and $j$.
\end{proof}

\begin{proposition}
    If $R$ is a principal ideal domain, then every homomorphic image of $R$ is Armendariz of the Hurwitz series type.
\end{proposition}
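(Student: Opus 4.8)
The plan is to reduce the statement to a single local quotient by exploiting the arithmetic of a PID, and then to run a content (lowest-valuation) argument over the residue field. Since every ideal of a PID $R$ is principal, each homomorphic image is $R/(d)$; the case $d=0$ returns the domain $R$ and a unit $d$ returns the zero ring, so assume $d$ is a nonzero nonunit. As a PID is a UFD, factor $d = u\,p_1^{e_1}\cdots p_k^{e_k}$ into distinct prime powers and apply the Chinese Remainder Theorem to obtain $R/(d) \cong \prod_{t=1}^k R/(p_t^{e_t})$. Because $H\!\left(\prod_t S_t\right) \cong \prod_t H S_t$ and both the relation $fg=0$ and the coefficient condition $a_ib_j=0$ are tested coordinatewise, a finite direct product is Armendariz of the Hurwitz series type exactly when each factor is. This reduces everything to the local chain ring $S = R/(p^{e})$, in which the ideals are the totally ordered $(\bar p^{\,j})$, every nonzero element is $\bar p^{\,r}\cdot(\text{unit})$ with $r<e$, and $\mathrm{Ann}_S(\bar p^{\,s}) = (\bar p^{\,e-s})$.

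The engine of the argument is the observation that for a domain $K$ of characteristic $0$ the Hurwitz series ring $HK$ is again a domain: if $f,g \in HK$ are nonzero with least nonzero coefficients at positions $i_0,j_0$, then the $(i_0+j_0)$-th coefficient of $fg$ is the single term $\binom{i_0+j_0}{i_0}\,a_{i_0}b_{j_0}$, which is nonzero because $a_{i_0}b_{j_0}\neq 0$ and multiplying a nonzero element of a torsion-free additive group by the positive integer $\binom{i_0+j_0}{i_0}$ cannot vanish; this is the same lowest-term mechanism underlying Lemma 2.3. Granting this, take $f,g \in HS$ with $fg=0$ (we may assume both nonzero). Let $v$ denote the $\bar p$-adic valuation on $S$, put $a = \min_i v(a_i)$, $b = \min_j v(b_j)$, and write $f = \bar p^{\,a} f_1$, $g = \bar p^{\,b} g_1$ with $f_1,g_1$ of content $0$. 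From $\bar p^{\,a+b} f_1 g_1 = 0$ every coefficient of $f_1g_1$ lies in $\mathrm{Ann}_S(\bar p^{\,a+b}) = (\bar p^{\,e-(a+b)})$; if $a+b \le e-1$ this forces $f_1g_1 \equiv 0 \pmod{\bar p}$, i.e. $\overline{f_1}\,\overline{g_1} = 0$ in $H(S/\bar pS) = H(R/(p))$. Since $f_1,g_1$ have unit coefficients their images are nonzero, contradicting that $H(R/(p))$ is a domain. Hence $a+b \ge e$, so $v(a_i)+v(b_j)\ge e$ and $a_ib_j=0$ for all $i,j$, which is exactly the Armendariz-of-Hurwitz condition.

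The step I expect to be the real obstacle --- and the one that decides whether the statement is even correct --- is the claim that $H(R/(p))$ is a domain, for this is where the characteristic intervenes. The lowest-term computation needs $\binom{i_0+j_0}{i_0}\neq 0$ in $R/(p)$, and by Kummer's theorem this binomial coefficient vanishes precisely when there is a carry in base-$p$ addition, so it can be $0$ once $R/(p)$ has positive characteristic: already for $R=\mathbb{Z}$ and any prime $p$, writing $e_n$ for the sequence with $1$ in position $n$, one has $f=g=e_{p-1}\in H(\mathbb{F}_p)$ with $fg=\binom{2p-2}{p-1}e_{2p-2}=0$ (as $p\mid\binom{2p-2}{p-1}$) while the coefficient product is $1\neq 0$; hence $\mathbb{F}_p=\mathbb{Z}/(p)$ is \emph{not} Armendariz of the Hurwitz series type. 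Thus the blanket hypothesis ``$R$ is a PID'' is too strong as stated, and the argument goes through exactly when every residue field $R/(p)$ has characteristic $0$, i.e. when $R$ is a $\mathbb{Q}$-algebra (for instance $R=k[x]$ with $k\supseteq\mathbb{Q}$). Under that hypothesis the non-reduced quotients such as the dual numbers $R/(p^2)$ are handled cleanly by the content argument, which reduces them to the characteristic-$0$ domain $H(R/(p))$. I would therefore first pin down the intended characteristic assumption on $R$, and only then carry out the valuation bookkeeping of the middle paragraph in full.
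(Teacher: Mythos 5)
Your reduction via the Chinese Remainder Theorem to the local quotients $R/(p^{e})$ is exactly the paper's opening move, but from there the two arguments part ways: the paper disposes of each factor by asserting that ``a similar proof to that of Rege and Chhawchharia (Proposition 2.1)'' applies, whereas you actually run the valuation argument and, in doing so, locate the precise point where that transfer breaks down. Your diagnosis is correct and your counterexample is decisive. Writing $e_{p-1}\in H(\mathbb{Z}/p\mathbb{Z})$ for the sequence with $1$ in position $p-1$ and $0$ elsewhere, the Hurwitz product gives $e_{p-1}\cdot e_{p-1}=\binom{2p-2}{p-1}e_{2p-2}=0$ since $p$ divides $\binom{2p-2}{p-1}$ (one carry in base $p$), while the coefficient product is $1\neq 0$; so the field $\mathbb{F}_p$, a homomorphic image of the PID $\mathbb{Z}$, is not Armendariz of the Hurwitz series type, and the proposition as stated is false. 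The Rege--Chhawchharia argument survives for ordinary polynomials precisely because no binomial coefficients enter the convolution; the Hurwitz multiplication inserts $\binom{n}{k}$ into every term, and these vanish modulo $p$. This is the same phenomenon that forces the hypothesis $\mathrm{char}(R)=0$ in Lemma 2.3 of the paper, and the paper's proof of Proposition 3.2 silently ignores it.

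Your repaired statement --- require every residue field $R/(p)$ to have characteristic $0$, equivalently take $R$ a $\mathbb{Q}$-algebra such as $k[x]$ with $k\supseteq\mathbb{Q}$ --- is the right fix, and under that hypothesis your argument is complete: $H(R/(p))$ is a domain by the lowest-term computation (the $(i_0+j_0)$-th coefficient of $fg$ reduces to the single term $\binom{i_0+j_0}{i_0}a_{i_0}b_{j_0}$, nonzero in a characteristic-zero domain), and the content bookkeeping in $S=R/(p^{e})$ then forces $a+b\ge e$ and hence $a_ib_j=0$ for all $i,j$. The one point worth recording carefully in a final write-up is that the factorizations $f=\bar p^{\,a}f_1$ and $g=\bar p^{\,b}g_1$ involve choices (the coefficients of $f_1$ are only determined modulo $\mathrm{Ann}_S(\bar p^{\,a})$), but any admissible choice has a unit coefficient at a position of minimal valuation, which is all that the reduction modulo $\bar p$ needs to produce nonzero images $\overline{f_1},\overline{g_1}$ and hence the contradiction.
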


\begin{proof}
    Let $I$ be a nonzero proper ideal of $R$ with $I = (p_1^{m_1} \ldots p_k^{m_k})R$, where $p_i$'s are distinct primes and $m_i \geq 1$ for all $i, k \geq 1$. Since $I = p_1^{m_1} \cap \ldots \cap p_k^{m_k} \cap R$, $R/I$ is isomorphic to $\dfrac{R}{p_1^{m_1}R} \oplus \ldots \oplus \dfrac{R}{p_k^{m_k}R}$ by the Chinese remainder theorem. Applying a similar proof to that of Rege and Chhawchharia~\cite{rege1997} (Proposition 2.1) to this situation, we can obtain that each $R/p_i^{m_i} R$ is Armendariz of the Hurwitz series type. It is trivial that the direct product of Armendariz rings of the Hurwitz series type is also Armendariz of the Hurwitz series type; hence, so is $R/I$.
\end{proof}

By Proposition 3.2, every factor ring of a polynomial Hurwitz ring over a field is Armendariz of the Hurwitz series type; however, this result need not hold for a polynomial Hurwitz ring over any commutative domain, as in the following remark.
\begin{remark}
    According to Anderson and Camillo~\cite{anderson1998} (Example 10), let $k$ be a field and $R = k[x, y] = k[x][y]$ be the polynomial ring with commuting indeterminates $x, y$ over $k$, Next, consider $S = \dfrac{k[x, y]}{(x^2, y^2)}$ with $(x^2, y^2)$ being the ideal of $R$ generated by $x^2$ and $y^2$. Then $S$ is not Armendariz by the equality $(x + yT)(x - yT) = 0$, where $T$ is an indeterminate over $S$. Applying the method in the proof of Anderson and Camillo~\cite{anderson1998} (Theorem 5) to the case of the Hurwitz series ring, we obtain the following proposition.
\end{remark}

\begin{proposition}
    Let $R$ be a ring and $n$ be a positive integer $\geq 2$. Then the following conditions are equivalent:
    \begin{itemize}
        \item[(1)] $R$ is reduced;
        \item[(2)] ${hR}/{(x^n)}$ is Armendariz;
        \item[(3)] ${hR}/{(x^n)}$ is Armendariz of the Hurwitz series type, where $(x^n)$ is the ideal of $hR$ generated by $(x^n)$.
    \end{itemize}
\end{proposition}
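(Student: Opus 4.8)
The plan is to prove the cycle $(1)\Rightarrow(3)\Rightarrow(2)\Rightarrow(1)$, writing $S=hR/(x^n)$ throughout and first recording the structure of $S$: its elements are images of truncated Hurwitz polynomials $\sum_{i<n}a_ix^i$, the image $\bar x$ of the variable is nilpotent, $R$ embeds in $S$ as the constants, and — as in the basic Hurwitz computations — each $a\in R$ commutes with every monomial $\bar x^{\,k}$ (both simply place $a$ at position $k$). The implication $(3)\Rightarrow(2)$ is then immediate, since Armendariz of the Hurwitz series type implies Armendariz for any ring, as noted in Section~1.

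For $(2)\Rightarrow(1)$ I would argue by contraposition. Assuming $R$ is not reduced, pick $0\ne a\in R$ with $a^2=0$ and set $u=\bar x^{\,n-1}\in S$. Because $n\ge2$ gives $2(n-1)\ge n$, the element $u$ squares to zero in $S$, while $au=ua\ne0$ since $a$ sits at position $n-1<n$. The ordinary polynomials $f(T)=a+uT$ and $g(T)=-a+uT$ in $S[T]$ then satisfy $f(T)g(T)=-a^2+(au-ua)T+u^2T^2=0$, yet the coefficient product $a\cdot u=au\ne0$; hence $S$ is not Armendariz. The only points needing care are the two facts $u^2=0$ and $au\ne0$, which is where one must pin down the exact image of $(x^n)$ in $S$ and track the binomial factor attached to $u^2=\binom{2n-2}{\,n-1\,}\bar x^{\,2n-2}$.

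The heart of the argument is $(1)\Rightarrow(3)$, for which I would adapt Anderson and Camillo's Theorem~5. Assume $R$ is reduced and take $\mathcal F,\mathcal G\in HS$ with Hurwitz product $\mathcal F\mathcal G=0$; I want every coefficient of $\mathcal F$ to annihilate every coefficient of $\mathcal G$. Each such coefficient lies in $S$ and is itself a truncated Hurwitz polynomial in $\bar x$ over $R$, so after expanding I am looking at a doubly-indexed family of elements of $R$ (one index from the outer Hurwitz-series degree, one from the inner $x$-position). The strategy is to exploit that a reduced ring is IFP and symmetric — so $rs=0\Leftrightarrow sr=0\Leftrightarrow rRs=0$ and vanishing of a product is insensitive to the order of its factors — and then run the usual Armendariz induction purely at the level of the $R$-coefficients: dispose of the lowest total-degree relation first and propagate upward, at each stage multiplying a vanishing relation by a suitable coefficient and stripping it off via reducedness. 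This yields that the coefficients of $\mathcal F$ kill those of $\mathcal G$, i.e.\ $(3)$.

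The step I expect to be the main obstacle is controlling the two superimposed Hurwitz products — the outer one on $HS$ and the inner one defining $S=hR/(x^n)$ — each of which decorates its terms with binomial coefficients, on top of the truncation at position $n$. Note that $hR$ need not be reduced even when $R$ is (a monomial $\bar x^{\,k}$ becomes nilpotent once a binomial coefficient dies in positive characteristic), so one cannot shortcut through reducedness of the extension and must push the whole induction down to $R$; there reducedness is exactly the lever that lets me discard the integer factors, since a product of elements of $R$ vanishes independently of any nonzero integer multiple and of the order of the factors. I would also nail down at the outset the precise description of the ideal $(x^n)$ and hence of $S$ — in particular whether $x^n$ is read as the position-$n$ generator or as $n!\,\bar x^{[n]}$ — because that factorial interacts with $\operatorname{char}R$ and is the place where a hidden hypothesis, if one is needed, would surface.
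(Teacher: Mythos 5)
Your route coincides with what the paper intends: the paper offers no proof of this proposition at all, only the preceding remark that it follows by ``applying the method in the proof of Anderson and Camillo (Theorem 5) to the case of the Hurwitz series ring,'' and your cycle $(1)\Rightarrow(3)\Rightarrow(2)\Rightarrow(1)$ --- with $(3)\Rightarrow(2)$ trivial and the other two steps modelled on Anderson--Camillo --- is exactly that plan. But as a proof the proposal has two genuine gaps, both sitting precisely at the points you yourself flag as uncertain, and the paper gives you nothing to fill them with.

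The first gap is in $(2)\Rightarrow(1)$. In $hR$ the Hurwitz powers of the variable satisfy $x^k=k!\,x^{[k]}$, where $x^{[k]}$ denotes the sequence with $1$ in position $k$ (this is the object the paper actually writes as $x^k$ in the proofs of Proposition 2.4 and Lemma 2.12). If you take $u=x^{n-1}=(n-1)!\,x^{[n-1]}$, then $u^2=x^{n-2}x^n\in(x^n)$ for free, but $au=(n-1)!\,a\,x^{[n-1]}$ can vanish for a nonzero nilpotent $a$ once the characteristic interferes with $(n-1)!$, and the witness collapses. If instead you take $u=\bar x^{[n-1]}$ (which is what your formula $u^2=\binom{2n-2}{n-1}\bar x^{\,2n-2}$ suggests), then $au\neq 0$, but $u^2$ need not lie in $(x^n)$: the two-sided ideal generated by $x^{[n]}$ meets position $2n-2$ only in the subgroup generated by the integers $\binom{j+n}{j}\binom{2n-2}{k}$ with $j+k=n-2$, and for $n=3$, $R=\mathbb{Z}[y]/(y^2)$ that subgroup is $4R$ while $u^2=6\,x^{[4]}$, so $f(T)g(T)=u^2T^2\neq 0$. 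Either way the implication is not established without an extra hypothesis such as $\mathbb{Q}\subseteq R$. The second gap is that $(1)\Rightarrow(3)$, the entire content of the proposition, remains a strategy: the induction ``controlling the two superimposed Hurwitz products'' is never carried out, and the obstruction is the same one as above --- the ideal $(x^n)$ does not contain the whole of positions $\geq n$, so $S=hR/(x^n)$ is not the free $R$-module on $1,\bar x^{[1]},\dots,\bar x^{[n-1]}$ that the Anderson--Camillo argument requires; it retains integer-torsion pieces in every position $\geq n$. You are right to suspect a hidden hypothesis; until it is pinned down (e.g.\ $\mathbb{Q}\subseteq R$, under which $S$ does become the expected truncation and everything reduces to Anderson--Camillo), neither your argument nor the paper's bare citation proves the statement as written.
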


\begin{proposition} \hspace{3cm}
        \begin{itemize}
        \item[(1)] Let $R$ be a domain and $M$ be an $(R,R)$-bimodule. Then $T(R,M)$ is Armendariz of Hurwitz series type if and only if so is $M$.
        \item[(2)] A ring $R$ is reduced if and only if $T(R, R)$ is Armendariz of the Hurwitz series type if and only if $T(R, R)$ is Armendariz.
    \end{itemize}
\end{proposition}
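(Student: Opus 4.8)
The plan is to exploit the pair description of Hurwitz series over a trivial extension. Writing an element of $HT(R,M)$ as a pair $(f,\mu)$ with $f\in HR$ and $\mu\in HM$ (where $HM$ carries the coefficientwise Hurwitz-convolution left/right action making it an $(HR,HR)$-bimodule), the product rule of $T(R,M)$ gives $(f_1,\mu_1)(f_2,\mu_2)=(f_1f_2,\;f_1\mu_2+\mu_1f_2)$. Hence a relation $FG=0$ in $HT(R,M)$ is equivalent to the two relations $f_1f_2=0$ in $HR$ and $f_1\mu_2+\mu_1f_2=0$ in $HM$, and the Armendariz-of-Hurwitz-type condition for $T(R,M)$ amounts to asking, for all coefficient indices $i,j$, that $a_ic_j=0$ and $a_in_j+m_ic_j=0$, where $a_i,c_j$ are the coefficients of $f_1,f_2$ and $m_i,n_j$ those of $\mu_1,\mu_2$. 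So the whole statement reduces to controlling the cross term $a_in_j+m_ic_j$.

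For (1), since $R$ is a domain I would first record that $HR$ is again a domain: if $f_1,f_2$ have lowest nonzero coefficients $a_s,c_t$, the position-$(s+t)$ coefficient of $f_1f_2$ is $\binom{s+t}{s}a_sc_t$, which is nonzero when $R$ is a characteristic-zero (torsion-free) domain, so $\binom{s+t}{s}$ is a nonzero central element. Thus $f_1f_2=0$ forces $f_1=0$ or $f_2=0$. In the first case all $a_i=0$, the surviving relation is $\mu_1f_2=0$, and the cross term collapses to $m_ic_j$; this vanishes for all $i,j$ precisely when $M$ is right Armendariz of Hurwitz type. The second case is symmetric and uses the left version. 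Conversely, testing $T(R,M)$ against the pairs $(0,\mu)$ and $(f,0)$, whose product is $(0,\mu f)$, forces the module condition on $M$. This yields the equivalence in (1), reading ``so is $M$'' as ``$M$ is Armendariz of Hurwitz type on both sides.''

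For (2), the implication ``Armendariz of Hurwitz type $\Rightarrow$ Armendariz'' is immediate from the definitions of Section~\ref{s1}. For ``Armendariz $\Rightarrow$ reduced'' I would argue contrapositively: given $0\neq a$ with $a^2=0$, map $x\mapsto(a,0)$ and $y\mapsto(0,1)$ into $T(R,R)$ (both square to zero) and form $F=(a,0)+(0,1)T$, $G=(a,0)-(0,1)T$; a direct check gives $FG=0$ while the coefficient product $(a,0)(0,1)=(0,a)\neq0$, so $T(R,R)$ is not Armendariz. For the remaining implication ``reduced $\Rightarrow$ Armendariz of Hurwitz type'' I would use that a reduced ring is a subdirect product of domains $R/P$, since its minimal primes are completely prime as in Remark~2.7(4). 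Reducing $f_1f_2=0$ and $f_1k+gh=0$ modulo each such $P$ lands in $H(R/P)$, which is a domain by the lowest-term argument above, so $\bar f_1=0$ or $\bar f_2=0$; a short case analysis then kills both $\bar a_i\bar c_j$ and the cross term $\bar a_i\bar d_j+\bar b_i\bar c_j$ in every quotient, and since $\bigcap P=0$ they vanish in $R$.

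The main obstacle I expect is exactly the cross-term equation $a_in_j+m_ic_j=0$ (equivalently $a_id_j+b_ic_j=0$ in part (2)): this is where the domain/reduced hypothesis must be spent, and the clean ``split one factor to zero'' device works only because $H$ of a domain is again a domain. That last fact is precisely where the characteristic-zero (torsion-free) hypothesis underlying Lemma~2.3 is essential: in positive characteristic the binomial coefficients in the Hurwitz product can annihilate leading terms (for instance $x^p=p!\,e_p=0$ over $\mathbb{F}_p$), so $H(R/P)$ need not be a domain and one would instead be forced into a more delicate direct induction on $i+j$ to peel off the cross term.
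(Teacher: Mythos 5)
Your proposal is essentially correct, and for part (1) it is exactly the argument the paper is gesturing at: the printed proof only cites Anderson--Camillo (Theorem 12) and Lee--Wong/Rege--Chhawchharia, and your pair decomposition $(f_1,\mu_1)(f_2,\mu_2)=(f_1f_2,\,f_1\mu_2+\mu_1f_2)$ together with ``$HR$ is a domain, so one factor dies and the cross term collapses'' is precisely the Hurwitz-series transcription of Anderson--Camillo's use of $R[x]$ being a domain. Where you genuinely diverge is the direction ``reduced $\Rightarrow T(R,R)$ Armendariz of Hurwitz type'' in (2): the cited sources run a direct coefficient induction in the spirit of Armendariz's lemma (here, Lemma~2.3), whereas you pass to a subdirect product of the domains $R/P$ ($P$ minimal, completely prime as in Remark~2.7(4)); your route buys a cleaner treatment of the cross term at the cost of needing $H(R/P)$ to be a domain for \emph{every} minimal prime. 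That is the one step you leave open, and you are right to worry about it: the lowest-term coefficient $\binom{s+t}{s}a_sc_t$ can vanish in positive characteristic (already $x^p=p!\,e_p=0$ over $\mathbb{F}_p$), so the proposition as printed is actually false without a torsion-freeness hypothesis --- $R=\mathbb{F}_p$ is reduced, yet $T(R,R)\supseteq\mathbb{F}_p$ is not Armendariz of Hurwitz series type --- and the defect is in the statement, not in your argument. To close your subdirect-product step under the correct hypothesis, add the observation that in a torsion-free reduced ring each $n\cdot 1$ ($n\neq 0$) is a non-zero-divisor, while minimal primes of a reduced ring consist of zero divisors; hence $n\cdot 1\notin P$, each $R/P$ is again a characteristic-zero domain, and your ``$H$ of a domain is a domain'' device applies. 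With that sentence added, both parts are complete and, if anything, more careful than the paper's citation-only proof.
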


\begin{proof}
    \begin{itemize}
        \item[(1)] By a similar proof to Anderson and Camillo~\cite{anderson1998} (Theorem 12).
        \item[(2)] By Lee and Wong~\cite{lee2003} (Theorem 2.3) and a similar proof to Rege and Chhawchharia~\cite{rege1997} (Proposition 2.5).
    \end{itemize}
\end{proof}

Next, we extend Lee and Wong~\cite{lee2003} (Theorem 2.4) onto the Hurwitz series rings. Let $A$ be a subset of the set $\{1, 2, \ldots, n\}$, and let $\psi: A \to \{1, 2, \ldots, n\}$ be an arbitrary function. By the same method in the proof of Lee and Wong~\cite{lee2003} (Theorem 2.4), we obtain the following proposition.

\begin{proposition}
    Let $R = R_0 \oplus R_1 \oplus R_2 \oplus R_3 \oplus \ldots$, where each $R$ is an additive subgroup of $R$ such that $R_iR_j \subseteq R_{i+j}$ for all $i, j$, and $R_k = 0$ for $k \geq 3$. Suppose that $R_1$ is a free $R_0$-module with a basis $v_1, v_2, \ldots, v_{n}$ such that $[v_i, R_0] = 0$ for each $i$ and $v_i v_j \neq 0$ if and only if $j = \psi(i)$, and that the set $\{v_i v_{\psi(i)} \mid i \in A\}$ forms a free basis of an $R_0$-module $R_2$. Then the following conditions are equivalent:
    \begin{itemize}
        \item[(1)] $R_0$ is reduced;
        \item[(2)] $R$ is an Armendariz ring;
        \item[(3)] $R$ is an Armendariz ring of the Hurwitz series type.
    \end{itemize}
    
\end{proposition}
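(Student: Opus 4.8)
The plan is to establish the cycle $(1)\Rightarrow(3)\Rightarrow(2)\Rightarrow(1)$. The implication $(3)\Rightarrow(2)$ is immediate, since an Armendariz ring of the Hurwitz series type is Armendariz by definition. For $(2)\Rightarrow(1)$ I argue by contraposition, following the Lee--Wong method: if $R_0$ is not reduced, choose $0\neq a\in R_0$ with $a^2=0$, and choose an element $u$ with $u^2=0$ and $au\neq 0$ --- take $u=v_iv_{\psi(i)}\in R_2$ for some $i\in A$ if $R_2\neq 0$ (then $u^2\in R_4=0$ and $au\neq 0$ by freeness of the basis of $R_2$), and $u=v_1\in R_1$ otherwise (then $R_1^2\subseteq R_2=0$, so $u^2=0$, and $au\neq0$ by freeness of $R_1$; here we use $R_1\neq 0$). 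Since $[v_i,R_0]=0$ forces $R_0$ to commute with $R_1$ and hence with $R_2$, we have $au=ua$, so the polynomials $F=a+ux$ and $G=a-ux$ in $R[x]$ satisfy $FG=a^2-u^2x^2+(ua-au)x=0$ while the coefficient product $F_0G_1=-au\neq 0$. Thus $R$ is not Armendariz, proving $(2)\Rightarrow(1)$.

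The substance is $(1)\Rightarrow(3)$, the only place the Hurwitz product intervenes. Assume $R_0$ is reduced and let $f=\sum_k f^{(k)}X^k$, $g=\sum_l g^{(l)}X^l\in HR$ with $fg=0$. Write each coefficient in its graded components, $f^{(k)}=a_k+b_k+c_k$ and $g^{(l)}=a'_l+b'_l+c'_l$ with $a\in R_0$, $b\in R_1$, $c\in R_2$. Because $R_k=0$ for $k\geq 3$ we have $R_1R_2=R_2R_1=R_2R_2=0$, so $f^{(k)}g^{(l)}$ has $R_0$-part $a_ka'_l$, $R_1$-part $a_kb'_l+b_ka'_l$, and $R_2$-part $a_kc'_l+c_ka'_l+b_kb'_l$. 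Since $R=R_0\oplus R_1\oplus R_2$ as abelian groups and multiplication respects the grading, the single identity $fg=0$ splits into three families of Hurwitz convolutions over $R_0$: expanding $b_k=\sum_i b_{k,i}v_i$ etc.\ in the free basis and using $[v_i,R_0]=0$, and expanding $R_2$ in the free basis $\{v_mv_{\psi(m)}:m\in A\}$ (so that $b_kb'_l=\sum_{m\in A}b_{k,m}b'_{l,\psi(m)}\,v_mv_{\psi(m)}$, the only surviving $v_iv_j$), one obtains for every $n$ the $R_0$-valued equations
\[
\sum_{k+l=n}\binom{n}{k}a_ka'_l=0,\qquad \sum_{k+l=n}\binom{n}{k}\bigl(a_kb'_{l,i}+b_{k,i}a'_l\bigr)=0
\]
for each basis index $i$, and $\sum_{k+l=n}\binom{n}{k}\bigl(a_kc'_{l,m}+c_{k,m}a'_l+b_{k,m}b'_{l,\psi(m)}\bigr)=0$ for each $m\in A$.

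These are peeled off in order. The first family says $\bar f\bar g=0$ for the projections $\bar f,\bar g\in HR_0$, so Lemma 2.3 (applicable since $R_0$ is reduced) gives $a_ka'_l=0$ for all $k,l$, whence $a'_la_k=0$ and $a_kR_0a'_l=0$ by reducedness. Substituting into the second family and inducting on $n$ yields $a_kb'_{l,i}=0$ and $b_{k,i}a'_l=0$ termwise; the decisive reduced-ring step is that a relation such as $a'_0b_{0,i}a'_0=0$ forces $(a'_0b_{0,i})^2=0$ and hence $a'_0b_{0,i}=0$. Summing over $i$ gives $a_kb'_l=b_ka'_l=0$. Feeding all $a$--$a$, $a$--$b$, $b$--$a$ vanishings into the third family and inducting again produces $a_kc'_l=c_ka'_l=b_kb'_l=0$. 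Hence the $R_0$-, $R_1$- and $R_2$-parts of $f^{(k)}g^{(l)}$ all vanish, i.e.\ $f^{(k)}g^{(l)}=0$ for all $k,l$, which is $(3)$.

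The main obstacle is the binomial coefficients $\binom{n}{k}$. Unlike the polynomial case of Lee--Wong, one cannot merely read off leading coefficient products; each inductive peeling must cancel these integers, which is legitimate precisely when $\mathrm{char}(R_0)=0$ (so that $\mathbb{Z}$ embeds in $R_0$ and $nr=0$ implies $r=0$). This hypothesis is already built into Lemma 2.3 and it is genuinely necessary: over $\mathbb{F}_2$ the ring $R=\mathbb{F}_2[t]/(t^3)$ has reduced $R_0=\mathbb{F}_2$ and is Armendariz, yet the Hurwitz series $(0,1,0,\dots)$ squares to $0$ in $HR$ while $1\cdot 1\neq 0$, so $(3)$ fails. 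I would therefore carry $\mathrm{char}(R_0)=0$ as a standing assumption throughout. The remaining delicate point is the bookkeeping in the third family: the genuinely new products $b_{k,m}b'_{l,\psi(m)}$ must be separated from the independent $a$--$c$ and $c$--$a$ products (by multiplying the convolution on suitable sides by $a_0$ or $a'_0$ and invoking the $(xy)^2=0\Rightarrow xy=0$ trick), and keeping the inductive hypotheses organized across the three families is where the care is required.
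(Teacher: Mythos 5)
Your argument is the same one the paper intends: the paper offers no proof of Proposition 3.6 beyond the sentence ``by the same method in the proof of Lee and Wong (Theorem 2.4)'', and what you have written is exactly that method transported to the Hurwitz product --- $(3)\Rightarrow(2)$ trivially, $(2)\Rightarrow(1)$ by the standard contrapositive with $u=v_iv_{\psi(i)}$ (or $u=v_1$ when $R_2=0$), and $(1)\Rightarrow(3)$ by splitting each coefficient into its graded components, reducing $fg=0$ to three families of $R_0$-valued convolutions, and peeling them off with the reduced-ring trick $(xy)^2=0\Rightarrow xy=0$. Your $(2)\Rightarrow(1)$ and the first two families are correct. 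The only place you stop short is the third family, which you yourself flag: the clean way to finish is to note that the second family, read in $HR_0$, gives $uw'_j=0=w_ju'$ for every basis index $j$ (where $u,u',w_j,w'_j$ denote the Hurwitz series of $R_0$-components), after which multiplying the third-family identity on suitable sides by $u$ and $u'$ in the reduced ring $HR_0$ isolates $w_mw'_{\psi(m)}=0$ and then the $a$--$c$ and $c$--$a$ parts. That step is routine but should be written out rather than described.

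Your remark about the characteristic is not a weakness of your proof but a genuine correction to the statement. As printed, Proposition 3.6 is false: your example $R=\mathbb{F}_2[t]/(t^3)$ satisfies every hypothesis with $R_0=\mathbb{F}_2$ reduced, yet $(0,1,0,\dots)^2=0$ in $HR$ while $1\cdot 1\neq 0$, so $(1)\Rightarrow(3)$ fails. A hypothesis of characteristic zero (really, torsion-freeness of the additive group, which is what the binomial cancellations and the appeal to Lemma 2.3 actually require) must be added, exactly as you propose; this is consistent with the paper's own Lemma 2.3 and with its blanket remark in the introduction that reduced rings of characteristic $0$ are Armendariz of the Hurwitz series type.
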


\begin{corollary}
    Let $S$ be a reduced ring and $I$ be an ideal of $S$.
    \begin{itemize}
        \item[(1)] Suppose that $S/I$ is a reduced ring. Then
        \[
          R = \left\{ \begin{pmatrix} a & f & g \\ 0 & a & h \\ 0 & 0 & a \end{pmatrix} \vert a \in S \text{ and } f, g, h \in \dfrac{S}{I} \right\}
        \]
          is an Armendariz ring of the Hurwitz series type with usual matrix addition and multiplication.
        \item[(2)] $\left\{ \begin{pmatrix} a & b & c \\ 0 & a & d \\ 0 & 0 & a \end{pmatrix} \bigg\vert a, b, c, d \in S \right\} $ is an Armendariz ring of the Hurwitz series type.
        \item[(3)] $T(S, \dfrac{S}{I})$ {\it  is an Armendariz ring of the Hurwitz series type when } $\dfrac{S}{I}$ \text{ is a reduced ring.}
    \end{itemize}
\end{corollary}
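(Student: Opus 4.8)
The plan is to read each of the three rings as a graded ring or as a subring of a product, and to reduce everything to Proposition 3.6, Proposition 3.5(2), and the two closure facts recorded earlier: subrings of rings that are Armendariz of the Hurwitz series type are again such, and so are finite direct products (the latter used in the proof of Proposition 3.2). I would treat part (2) as a direct instance of Proposition 3.6. Writing $E_{ij}$ for the matrix units, grade the ring as $R_0\oplus R_1\oplus R_2$ with $R_0=S\cdot I_3$ (the scalar matrices, so $R_0\cong S$), $R_1=SE_{12}\oplus SE_{23}$, and $R_2=SE_{13}$. Then $R_iR_j\subseteq R_{i+j}$ and $R_k=0$ for $k\ge 3$; the module $R_1$ is free over $R_0$ on $v_1=E_{12}$, $v_2=E_{23}$; these commute with the central scalar matrices, so $[v_i,R_0]=0$; among the products only $v_1v_2=E_{13}$ is nonzero, so $A=\{1\}$, $\psi(1)=2$, and $\{v_1v_{\psi(1)}\}=\{E_{13}\}$ is a free basis of $R_2$. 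Since $S$ is reduced, Proposition 3.6 gives the conclusion. This argument is insensitive to the choice of reduced base, so I would record it as ``part (2) over an arbitrary reduced ring.''

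For parts (1) and (3) the free-module hypothesis of Proposition 3.6 is no longer available, so instead I would embed each ring into a finite product of rings already known to be Armendariz of the Hurwitz series type and then invoke closure under finite products and subrings. For part (3), combine the diagonal projection $\pi\colon T(S,S/I)\to S$ with the reduction of the diagonal modulo $I$, $\rho\colon T(S,S/I)\to T(S/I,S/I)$; the combined map into $S\times T(S/I,S/I)$ is injective, $S$ is reduced and hence Armendariz of the Hurwitz series type by Lemma 2.3, and $T(S/I,S/I)$ is Armendariz of the Hurwitz series type by Proposition 3.5(2) because $S/I$ is reduced. For part (1), let $R'$ denote the equal-diagonal upper-triangular ring of part (2) but built over the reduced ring $S/I$; the projection onto the common diagonal entry together with the reduction of that entry modulo $I$ embeds the displayed $3\times 3$ ring into $S\times R'$, and $R'$ is Armendariz of the Hurwitz series type by part (2) applied to the reduced base $S/I$. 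In both cases the product, and therefore the subring, is Armendariz of the Hurwitz series type.

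The main obstacle is conceptual rather than computational: in parts (1) and (3) the off-diagonal entries lie in the quotient $S/I$, so $R_1$ is a nonfree $S$-module and Proposition 3.6 cannot be applied on the nose, whereas in part (2) the entries lie in $S$ and freeness holds. The embedding device resolves this by separating the genuine $S$-structure carried by the diagonal from the reduced quotient structure of the off-diagonal part, after which the closure properties finish the proof. The only routine verifications are that $\rho$ respects multiplication, which holds because the $(S,S)$-action on $S/I$ factors through the quotient map $S\to S/I$, and that the embeddings are injective, which follows since $a=0$ on the $S$-factor together with vanishing on the reduced factor forces all entries to vanish; neither presents difficulty.
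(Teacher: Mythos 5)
Your proposal is correct within the paper's framework, but it takes a genuinely different route for parts (1) and (3). The paper proves (1) \emph{first}, by applying Proposition 3.6 directly to the grading $R_0=S\cdot I_3$, $R_1=(S/I)E_{12}\oplus(S/I)E_{23}$, $R_2=(S/I)E_{13}$ with $v_1=(1+I)E_{12}$, $v_2=(1+I)E_{23}$, and then obtains (2) by setting $I=0$ and (3) by exhibiting $T(S,S/I)$ as a subring of the ring in (1) (via $(a,m)\mapsto aI_3+mE_{12}$). You invert this order: you prove (2) directly from Proposition 3.6 (where the freeness hypothesis genuinely holds) and then deduce (1) and (3) by embedding into $S\times R'$ and $S\times T(S/I,S/I)$ respectively, using closure under subrings and finite products. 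Your reordering actually addresses a real weakness in the paper's argument: when $I\neq 0$, the module $R_1\cong(S/I)^2$ is generated but \emph{not free} over $R_0\cong S$ (since $Iv_1=0$), so Proposition 3.6 does not literally apply to part (1) as the paper claims; your embedding device sidesteps this cleanly, at the modest cost of a slightly longer argument. One caveat: you justify ``$S$ reduced $\Rightarrow$ $S$ Armendariz of the Hurwitz series type'' by Lemma 2.3, which carries a $\operatorname{char}(R)=0$ hypothesis that the corollary does not assume; to stay consistent with the paper's (characteristic-free) statement of Proposition 3.6 you should instead observe that $S\cong R_0$ is a subring of the part-(2) ring you have already handled, so it inherits the property without any appeal to Lemma 2.3. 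With that substitution your proof is complete.
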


\begin{proof}
        \begin{itemize}
        \item[(1)] Let $n=2$, $A=\{1\}$, and $\psi(1)=2$. Next, set
\begin{equation*}
               R_0 = \left\{\begin{pmatrix} a & 0 & 0 \\ 0 & a & 0 \\ 0 & 0 & a \end{pmatrix} \in R \right\} 
\end{equation*}
\begin{equation*}
               R_1 = \left\{\begin{pmatrix} 0 & f & 0 \\ 0 & 0 & h \\ 0 & 0 & 0 \end{pmatrix}\in R \right\} 
\end{equation*}
\begin{equation*}
               R_2 = \left\{\begin{pmatrix} 0 & 0 & g \\ 0 & 0 & 0 \\ 0 & 0 & 0 \end{pmatrix}\in R \right\} 
\end{equation*}
        \[
        \upsilon_1 = \begin{pmatrix} 0 & 1+I & 0 \\ 0 & 0 & 0 \\ 0 & 0 & 0 \end{pmatrix}, \quad
        \text{and} \quad
        \upsilon_2 = \begin{pmatrix} 0 & 0 & 0 \\ 0 & 0 & 1+I \\ 0 & 0 & 0 \end{pmatrix}.
        \]
        Then Proposition 3.6 applies.
        \item[(2)] Set $I$ = 0 in (1).
        \item[(3)] $T(S, \dfrac{S}{I})$ is isomorphic to a subring of the ring $R$ in Proposition 3.6.
    \end{itemize}
\end{proof}

\begin{remark}
    \begin{itemize}
        \item[(1)] Based on Corollary 3.7(2), one may suspect that the ring
    \[
    R_n = \left\{ \begin{pmatrix} a & a_{12} & a_{13} & \cdots & a_{1n} \\ 0 & a & a_{23} & \cdots & a_{2n} \\ 0 & 0 & a & \cdots & a_{3n} \\ \vdots & \vdots & \vdots & \ddots & \vdots \\ 0 & 0 & 0 & \cdots & a \end{pmatrix} \bigl\vert a,a_{ij} \in S \right\}
    \]
    It may also be an Armendariz ring of the Hurwitz series type for $n \geq 4$ when $S$ is a reduced ring.\\
        \item[(2)] From Corollary 3.7(3), one may also suspect that if $R$ is Armendariz of the Hurwitz series type, then so is the trivial extension $T(R, R)$.
    \end{itemize}
\end{remark}

The ring $R$ in Example 2.1 is a non-commutative local ring that is not Armendariz of the Hurwitz series type, with $(A)$ being the maximal ideal, where $(A)$ is the ideal generated by $A$. However, we observe a class of local rings that may be Armendariz of Hurwitz series type as follows. An element $a$ in a ring $R$ is called regular if $r_R(a) = 0 = l_R(a)$, i.e., $a$ is not a zero divisor.

\begin{proposition}
    Let $R$ be a ring and $J$ be an ideal of $R$ such that every element in $R/J$ is regular and $J^2 = 0$. Then $R$ is an Armendariz ring of the Hurwitz series type.
\end{proposition}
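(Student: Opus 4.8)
The plan is to reduce modulo $J$, use that $R/J$ is a domain to force one of the two series into $HJ$, and then handle the remaining mixed products via $J^2=0$.

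First I would observe that the hypothesis that every nonzero element of $R/J$ is regular is exactly the statement that $R/J$ is a domain. Given $f=(a_i)$ and $g=(b_j)$ in $HR$ with $fg=0$, I would reduce coefficientwise to get $\bar f\,\bar g=0$ in $H(R/J)$. The next step is to argue that $H(R/J)$ is itself a domain: if $\bar a_m$ and $\bar b_k$ are the first nonzero coefficients of $\bar f$ and $\bar g$, then the lowest nonvanishing term of the Hurwitz product is $\binom{m+k}{m}\bar a_m\bar b_k$, and since a characteristic-zero domain is additively torsion-free this is nonzero (this is where $\operatorname{char}=0$, already needed for Lemma 2.3, enters). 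Hence $\bar f=0$ or $\bar g=0$; that is, all $a_i\in J$ or all $b_j\in J$.

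By symmetry I would then assume all $b_j\in J$. For every index $i$ with $a_i\in J$ one gets $a_ib_j\in J^2=0$ for free, so only the indices in $P=\{i:a_i\notin J\}$ remain. Since $J^2=0$, each product $a_ib_j$ with $b_j\in J$ depends only on $a_i$ modulo $J$; discarding from the relations $(fg)_n=\sum_k\binom{n}{k}a_kb_{n-k}=0$ the terms whose $a_k$ lies in $J$ leaves the system $\sum_{k\in P}\binom{n}{k}\,\bar a_k\,b_{n-k}=0$, an identity valued in the $(R/J,R/J)$-bimodule $J$. I would then try to peel off the coefficient products by a lowest-index induction: with $p_0=\min P$ the element $\bar a_{p_0}$ is regular in $R/J$, the $n=p_0$ relation gives $a_{p_0}b_0=0$, and feeding this back into the higher relations should successively yield $a_{p_0}b_1,a_{p_0}b_2,\dots=0$ and then the corresponding products for $p_1,p_2,\dots$.

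The hard part will be precisely this extraction, and it is where the Hurwitz setting is more delicate than the power-series one. Each relation couples a low-$a$/high-$b$ product with a high-$a$/low-$b$ product, so no single induction order isolates one product outright, and the surviving leading term carries a binomial coefficient $\binom{n}{k}$ rather than $1$. To conclude $a_ib_j=0$ I must cancel this integer inside $J$, which is legitimate only when $J$ has no additive torsion interfering with it. This is automatic in the split model with $\mathbb{Q}\subseteq R$, where one can instead use the isomorphism $HR\cong R[[x]]$ sending $(a_n)$ to $\sum a_nx^n/n!$ and fall back on the binomial-free power-series computation; in general, however, this binomial-coefficient cancellation is the crux, and I expect the argument to go through under $\operatorname{char}R=0$ together with suitable torsion-freeness of $J$ as an $R/J$-module.
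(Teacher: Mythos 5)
Your reduction mod $J$ is in the spirit of the paper's first step, but you stop at the wrong intermediate statement, and that is where the gap opens. You conclude only that one of $\bar f,\bar g$ vanishes in $H(R/J)$, i.e.\ that all coefficients of \emph{one} of the two series lie in $J$, and you are then left having to prove $a_ib_j=0$ for the regular coefficients $a_i\notin J$ against the $b_j\in J$ --- the ``extraction'' you yourself flag as unresolved. That residual problem never arises in the paper's argument, because the paper proves the stronger statement that \emph{both} series have all of their coefficients in $J$, after which every product $a_ib_j$ lies in $J^2=0$ with nothing left to extract. The mechanism is: from $\bar f\bar g=0$ over the domain $R/J$ one first gets only $a_ib_j\in J$ for all $i,j$ (Armendariz's lemma); then, taking $a_0,b_0\neq 0$ and inducting on $n$ with hypothesis $a_0,\dots,a_{n-1},b_0,\dots,b_{n-1}\in J$, the convolution identity $\sum_k\binom{n}{k}a_kb_{n-k}=0$ collapses modulo $J^2$ to $a_0b_n+a_nb_0=0$, since every middle term is a product of two elements of $J$ and the binomial coefficients are therefore irrelevant. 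If $a_n$ were regular then $a_nb_0\neq0$, hence $a_0b_n\neq 0$, hence $b_n\notin J$ (because $a_0\in J$ and $J^2=0$) and $b_n$ is regular as well; but then $a_nb_n$ would be regular, contradicting $a_nb_n\in J$, which is nilpotent. So $a_n,b_n\in J$. This simultaneous two-sided containment is the idea your proposal is missing, and it is what makes the binomial coefficients harmless.

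That said, your instinct about characteristic and torsion is sound: both your step ``$H(R/J)$ is a domain'' and the paper's appeal to Armendariz's lemma for the Hurwitz product over $R/J$ genuinely require $R/J$ to have characteristic zero (in $H(\mathbb{F}_2)$ one has $x\cdot x=2x^2=0$), and the paper's opening reduction ``we may assume $a_0,b_0$ are both nonzero'' is not harmless in the Hurwitz setting: for $R=\Z/4\Z$ and $J=2R$ the hypotheses of the proposition hold, yet $f=(0,1,0,\dots)$ and $g=(0,2,0,\dots)$ satisfy $fg=0$ while $a_1b_1=2\neq0$. So the statement needs a characteristic-zero/torsion-freeness hypothesis that neither the proposition nor the paper's proof makes explicit; but within the regime where the result is true, the step your proposal lacks is the proof that all coefficients of both $f$ and $g$ land in $J$.
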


\begin{proof}
    Let $fg = 0$ for $f = \sum_{i=0}^{\infty} a_iX^i$ and $g = \sum_{j=0}^{\infty} b_jX^j$ in $HR$. We may assume that $a_0$ and $b_0$ are both nonzero. Since $R/J$ is a domain, Armendariz (~\cite{armendariz1974}, Lemma 1) implies that $a_ib_j \in J$ for all $i$ and $j$. In the following computation, we use the condition that $R/J$ is the set of all regular elements without mention. $a_0b_0 = 0$ gives $a_0, b_0 \in J$. $a_0b_1 + a_1b_0 = 0$ gives that $a_1$ is regular if and only if $b_1$ is regular. For, if $a_1$ is regular, then $a_1b_0 \neq 0$ and so $a_0b_1 \neq 0$. This implies that $b_1 \notin J$ because $J^2 = 0$, and so $b_1$ is regular. The converse is proven using the same method. Hence $a_1, b_1 \in J$ since $a_1b_1 \in J$. Summarizing, we have $a_ib_j = 0$ for all $i,j \in \{0,1\}$.
    We proceed by induction on $n$. Consider $a_0b_n + (n-1)a_1b_{n-1} + \dots + (n-1)a_{n-1}b_1 + a_nb_0 = 0$, then $a_i, b_j \in J$ for $i,j \in \{0, 1, \dots, n-1\}$ by the induction hypothesis and so $J^2 = 0$ implies $a_0b_n + a_nb_0 = 0$. So $a_nb_n \in J$ gives $a_n, b_n \in J$ by the above computation. Consequently, $a_ib_j = 0$ for all $i$ and $j$ since $J^2 = 0$.
\end{proof}
 
Let $R$ be a commutative ring and $h$ be a ring endomorphism of $R$. For an $R$-module $M$, the multiplication $(a, m)(b, n) = (ab, h(a)n + bm)$ gives a ring structure to $R \oplus M$, denoted by $R(+)hM$.

\begin{corollary}
    \begin{itemize}
        \item[(1)] (Anderson and Camillo, ~\cite{anderson1998}, Theorem 12) Let $R$ be a commutative domain and $M$ be an $R$-module. If $M$ is torsion-free, then the trivial extension $T(R,M)$ is an Armendariz ring (of Hurwitz series type).
        \item[(2)] (Rege and Chhawchharia, ~\cite{rege1997}, Proposition 2.8) Let $K$ be a field, $h$ be a nonzero ring endomorphism of $K$, and $V$ be a $K$-vector space. Then $K(+)hV$ is an Armendariz ring (of the Hurwitz series type).
        \item[(3)] Local rings are Armendariz of the Hurwitz series type when $J^2 = 0$, where $J$ is the Jacobson radical.
    \end{itemize}
\end{corollary}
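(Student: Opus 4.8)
The plan is to derive all three parts uniformly from Proposition 3.9: in each case I produce an ideal $J$ with $J^2 = 0$ whose quotient $R/J$ is a domain, and then invoke Proposition 3.9 to conclude that the ring is Armendariz of the Hurwitz series type. Since Armendariz of the Hurwitz series type implies Armendariz (by the convention fixed in the introduction), proving the stronger property settles the parenthetical claim as well, so there is nothing separate to check for ordinary Armendarizness.

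For part (1) I would take $J = 0 \oplus M = \{(0,m) : m \in M\}$ inside $T(R,M) = R \oplus M$. Using the trivial-extension product $(a,m)(b,n) = (ab,\, an + mb)$ one checks that $J$ is a two-sided ideal and that $(0,m)(0,n) = (0,0)$, so $J^2 = 0$; the projection $(a,m) \mapsto a$ identifies $T(R,M)/J$ with the domain $R$, and Proposition 3.9 applies. Note this route never uses torsion-freeness of $M$: that hypothesis is what Anderson and Camillo require for their ordinary-Armendariz statement and is recorded here only to match their formulation. Part (2) is identical with $J = 0 \oplus V \subseteq K(+)hV = K \oplus V$: from the twisted product $(a,m)(b,n) = (ab,\, h(a)n + bm)$ one gets that $J$ is an ideal and, because $h(0)=0$, that $(0,m)(0,n) = (0,0)$, so $J^2 = 0$; the quotient $K(+)hV/J \cong K$ is a field, hence a domain, and Proposition 3.9 again gives the conclusion, this time without invoking $h \neq 0$.

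For part (3) the reduction is immediate: if $R$ is local then its Jacobson radical $J$ is the unique maximal one-sided ideal and $R/J$ is a division ring, in particular a domain. The standing hypothesis $J^2 = 0$ is precisely the remaining assumption of Proposition 3.9, so the proposition applies verbatim. Thus all three statements are instances of a single square-zero-ideal phenomenon, and the only creative step is recognizing that the trivial extension, the idealization $R(+)hV$, and a local ring each carry the canonical square-zero ideal whose residue is the underlying domain or division ring.

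The step I would watch most carefully is the characteristic, which is the genuine content hidden behind Proposition 3.9. That proposition is proved through Lemma 2.3, and Lemma 2.3 requires characteristic zero: a domain of positive characteristic is \emph{not} Armendariz of the Hurwitz series type. For instance, in $H\mathbb{F}_2$ one has $X \cdot X = 0$ because the only surviving coefficient of the product sits in degree $2$ and equals $\binom{2}{1} = 2 \equiv 0 \pmod 2$, yet the relevant coefficient product is $1 \neq 0$. Consequently the ``Hurwitz series type'' half of each assertion must be read with the relevant residue domain of characteristic zero --- $R$ in (1), $K$ in (2), and the residue division ring $R/J$ in (3). Once that characteristic hypothesis is in force the remaining work is exactly the routine verifications $J^2 = 0$ and $R/J$ a domain carried out above, so this is where I would place the emphasis rather than on any further algebraic computation.
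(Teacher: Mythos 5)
Your overall strategy is exactly the paper's: for each part you exhibit the canonical square-zero ideal ($0\oplus M$ in $T(R,M)$, $0\oplus V$ in $K(+)hV$, and the Jacobson radical in the local case) and invoke Proposition 3.9. However, you have misread the hypothesis of Proposition 3.9, and this produces two false side-claims. The proposition does not merely require that $R/J$ be a domain; as its own proof makes explicit (``we use the condition that $R/J$ is the set of all regular elements''), it requires that every element of $R$ \emph{outside} $J$ be a regular element \emph{of $R$}, i.e.\ a non-zero-divisor in $R$ itself. Steps such as ``if $a_1$ is regular, then $a_1b_0\neq 0$'' (with $b_0\in J$ nonzero) are meaningless if one only knows that $a_1$ is nonzero modulo $J$. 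This is precisely where the hypotheses you dismiss enter: in (1), an element $(a,m)$ with $a\neq 0$ is regular in $T(R,M)$ only because $an=0$ forces $n=0$, which is torsion-freeness of $M$; in (2), $(a,m)(0,n)=(0,h(a)n)$, so one needs $h(a)\neq 0$ for $a\neq 0$, which is where ``$h$ nonzero'' is used. Your assertions that torsion-freeness and $h\neq 0$ are not needed for this route are therefore incorrect, and with only ``$R/J$ is a domain'' in hand the appeal to Proposition 3.9 does not go through.

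Your closing observation about characteristic is, on the other hand, a legitimate criticism of the paper rather than of your own argument: the first step of the proof of Proposition 3.9 (deducing $a_ib_j\in J$ from $fg=0$) passes through the quotient $H(R/J)$ and genuinely needs $R/J$ to be Armendariz of the Hurwitz series type, which fails in positive characteristic — your computation $X\cdot X=0$ in $H\mathbb{F}_2$ with $1\cdot 1\neq 0$ is correct, and more generally $X^{(1)}X^{(n-1)}=\binom{n}{1}X^{(n)}=0$ kills any ring of characteristic $n>0$. The paper states neither this restriction in Proposition 3.9 nor in the corollary, so flagging it is worthwhile. In summary: same decomposition and same key lemma as the paper, a valid extra caveat about characteristic, but a concrete gap in the verification of the regularity hypothesis for parts (1) and (2) that you would need to repair by actually checking that every element outside $J$ is a non-zero-divisor in the ambient ring.
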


\begin{proof}
    \begin{itemize}
        \item[(1)] Let $J = \{(a, m) \in T(R,M) \mid a = 0\}$, then $J^2 = 0$ and $R/J$ is the set of all regular elements in $T(R,M)$.
        \item[(2)] Let $J = \{(k, \nu) \in K(+)_hV \mid k = 0\}$, then $J^2 = 0$ and $R/J$ is the set of all regular elements in $K(+)_hV$.
        \item[(3)] Let $R$ be a local ring. Then $R/J$ is clearly the set of all regular elements in $R$.
    \end{itemize}
\end{proof}

However, the ring $R$ in Proposition 3.9 need not be Armendariz (of the Hurwitz series type) when $J_n = 0$ with $n \geq 3$ by the following example.
\begin{example}
    \begin{itemize}
        \item[(1)] (Anderson and Camillo, \cite{anderson1998}, Example 10) and Example 2.1 are counterexamples to the case of $J^3 = 0$.
        \item[(2)] Denote the Jacobson radicals of the following rings by $J$. Let $F$ be a simple domain and $R = \dfrac{F[x, y]}{(x^m,y^m)}$ with $m \geq 2$, where $F[x, y]$ is the polynomial ring with $x, y$ commuting indeterminates over $F$, and $(x^m, y^m)$ is the ideal of $F[x, y]$ generated by $x^m, y^m$. Then $J^{2m-1} = 0$ with $J = xR + yR$. Let $T$ be an indeterminate over $R$. If $m$ is even, then the equality $(x^\frac{m}{2} + y^\frac{m}{2} T)(x^\frac{m}{2} - y^\frac{m}{2} T) = 0$ implies that $R$ is not Armendariz. Next, let $S = \frac{F[x, y]}{(x^{m+1}, y^m)}$ with $m \geq 2$, where $(x^m, y^m)$ is the ideal of $F[x, y]$ generated by $x^{m+1}, y^m$. Then $J^{2m} = 0$ with $J = xS + yS$. Let $T$ be an indeterminate over $S$. If $m$ is even, then the equality $(x^{\frac{m}{2} + 1} + y^{\frac{m}{2}} T)(x^{\frac{m}{2} + 1} - y^{\frac{m}{2}} T) = 0$ implies that $S$ is not Armendariz. If $m$ is odd, then the equality $(x^{\frac{m+1}{2}} + y^{\frac{m+}{2}} T)(x^{\frac{m+1}{2}} - y^{\frac{m+1}{2}} T) = 0$ implies that $S$ is not Armendariz.
    \end{itemize}
\end{example}

The following may be convenient methods to check whether the given rings are Armendariz of Hurwitz series type if they are available.

\begin{corollary}
    \begin{itemize}
        \item[(1)] Let $R$ be an Abelian ring and $\text{char}(R) = 0$. Then $R$ is Armendariz of the Hurwitz series type if and only if $eR$ and $(1 - e)R$ are Armendariz of the Hurwitz series type for every idempotent $e$ of $R$ if and only if $eR$ and $(1 - e)R$ are Armendariz of Hurwitz series type for some idempotent $e$ of $R$.
        \item[(2)] For a ring $R$, suppose that $R/I$ is an Armendariz ring of the Hurwitz series type for some ideal $I$ of $R$ and $\text{char}(R/I) = 0$. If $I$ is a reduced ring, then $R$ is Armendariz of Hurwitz series type.
    \end{itemize}
\end{corollary}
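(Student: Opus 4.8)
The plan for (1) is to exploit that $R$ is Abelian, so every idempotent $e$ is central; consequently $eR$ and $(1-e)R$ are two-sided ideals and $R$ is an internal direct product $R \cong eR \times (1-e)R$ of rings (with identities $e$ and $1-e$). I would prove the three conditions equivalent as a cycle. For the implication that $R$ Armendariz of the Hurwitz series type forces $eR$ and $(1-e)R$ to be so for every $e$: the point is that $H(eR)$ is a subring of $HR$ and the Hurwitz product in $H(eR)$ is the restriction of the one in $HR$, so any identity $fg=0$ over $eR$ is already an identity over $R$; the conclusion then follows because subrings of Armendariz rings of the Hurwitz series type are again of this type, as noted in the Introduction. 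The implication ``for every $e$'' $\Rightarrow$ ``for some $e$'' is immediate (take $e=0$ or $e=1$). Finally, for ``for some $e$'' $\Rightarrow$ ``$R$'': centrality of $e$ gives $HR \cong H(eR)\times H((1-e)R)$, with the factorial coefficients of the Hurwitz product respecting the coordinatewise splitting precisely because $e$ is central, and then I would invoke that a finite direct product of Armendariz rings of the Hurwitz series type is again such, as recorded in the proof of Proposition 3.2. The hypothesis $\mathrm{char}(R)=0$ only serves to keep the statement inside the standing char-zero framework; the decomposition argument is itself characteristic-free.

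For (2), let $f=(a_i)$ and $g=(b_j)$ in $HR$ with $fg=0$; the goal is $a_ib_j=0$ for all $i,j$. First I would reduce modulo $I$: the canonical map $R\to R/I$ induces $HR\to H(R/I)$, and since the images of $f,g$ multiply to $0$ and $R/I$ is Armendariz of the Hurwitz series type, I get $a_ib_j\in I$ for all $i,j$. Next I would sharpen this using that $R/I$ is IFP by Proposition 2.4, which yields $a_i R b_j \subseteq I$ for all $i,j$. The remaining and decisive step is to upgrade $a_ib_j\in I$ to $a_ib_j=0$ using that $I$ is reduced. The tool I would set up first is a reduced-ideal reversibility lemma: for $x\in I$ and $r,s\in R$, one has $rx=0$ iff $xr=0$, and more generally $rxs=0$ implies $xsr=0$ --- each proved by squaring the candidate element, which lies in $I$, and applying reducedness. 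With this in hand I would induct on $n$, peeling the Hurwitz relations $\sum_{k=0}^n \binom{n}{k}a_k b_{n-k}=0$: multiply a given relation by suitable previously handled coefficients so that every resulting product carries a factor in $I$, then use the reversibility lemma together with $x^2=0\Rightarrow x=0$ in $I$ to force the extremal terms to vanish, and feed this back into the lower relations. Here $\mathrm{char}(R/I)=0$, which forces $\mathrm{char}(R)=0$, is what keeps the integer binomial coefficients from collapsing, so that the peeling can isolate the terms $a_ib_j$ one at a time.

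The hard part will be this last step, and specifically the fact that $R$ itself is neither reduced nor reversible --- only the ideal $I$ is. The classical ``reduced rings are Armendariz'' peeling freely reverses a relation such as $a_0b_0=0$ into $b_0a_0=0$, but that move is unavailable here, since $b_0a_0$ need only map to a nilpotent (not to $0$) in $R/I$. The whole difficulty is therefore to arrange each manipulation so that the products being reversed or squared genuinely lie in the reduced ideal $I$; this is exactly what the IFP enhancement $a_iRb_j\subseteq I$ of the second step is for, since it places the cross terms $a_i r b_j$ inside $I$, where the reduced-ideal lemma legitimately applies. Carrying this bookkeeping through while simultaneously tracking the factorial coefficients of the Hurwitz product, rather than the plain coefficients of the ordinary polynomial product, is the additional technical cost of working in $HR$; I expect the cleanest route to mirror the reduced-ring argument behind Lemma 2.3 together with the Anderson--Camillo-style computations invoked earlier in this section.
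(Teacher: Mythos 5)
Your argument for part (1) is correct and is essentially the paper's route: the paper simply cites Huh et al.\ (Proposition 10), which is precisely the decomposition $R\cong eR\times(1-e)R$ for a central idempotent $e$, combined with the facts that subrings and finite direct products inherit the Hurwitz-series Armendariz property.

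For part (2) there is a genuine gap, and it sits exactly where you predicted it would. Your reduction mod $I$ (giving $a_ib_j\in I$), the IFP upgrade (giving $a_iRb_j\subseteq I$), and the reversibility lemma for reduced ideals are all correct as stated. But that lemma only applies when one of the \emph{factors} of the product in question lies in $I$: to kill a cross term such as $a_irb_j$ you would compute $(a_irb_j)^2=a_ir(b_ja_i)rb_j$ and need $b_ja_i=0$, which is the very reversal you acknowledged is unavailable, since neither $a_i$ nor $b_j$ lies in $I$. Knowing merely that $a_irb_j$ is an \emph{element} of the reduced ideal $I$ does not make it vanish --- reduced rings have plenty of nonzero elements --- and nothing in your setup shows these cross terms are nilpotent. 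So the peeling induction cannot get past degree $1$. The missing idea (which is the actual content of the proof of Huh et al., Theorem 11, that the paper's own argument leans on) is to multiply $f$ on the left by an arbitrary $r\in I$: then $(rf)g=0$ and \emph{every coefficient} $ra_i$ of $rf$ lies in $I$, so your reversibility lemma genuinely applies at every step of the classical peeling argument and yields $(ra_i)b_j=0$ for all $i,j$ and all $r\in I$. Hence $Ia_ib_j=0$, and since $a_ib_j\in I$ this gives $(a_ib_j)^2\in Ia_ib_j=0$, whence $a_ib_j=0$ by reducedness of $I$. (The hypothesis $\mathrm{char}(R/I)=0$ enters in this peeling to remove the integer factors $\binom{n}{k}$ produced by the Hurwitz product.) Without this detour through $rf$, your outline does not close.
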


\begin{proof}
    \begin{itemize}
        \item[(1)] can be proved by a similar proof of Huh et al. (~\cite{huh2002}, Proposition 10).
        \item[(2)] Let $f = \sum_{i=0}^{\infty} a_iX^i$, $g = \sum_{j=0}^{\infty} b_jX^j \in HR$ such that $fg = 0$. Since $R/I$ is Armendariz of Hurwitz series type and $\text{char}(R/I) = 0$, we have $a_ib_j \in I$ for all $i,j$. First, we show that $a_0b_j = 0$ for all $j$. Assume, on the contrary, that $a_0b_n \neq 0$ for some $n$ (let $n$ be the smallest one relating to this property). Using the condition that $I$ is reduced and the proof of Huh et al. (~\cite{huh2002}, Theorem 11), we get a contradiction. Therefore, $a_0b_j = 0$ for all $j$. Consequently, $0 = fg = \sum_{i=1}^{\infty} a_iX^{i} \sum_{j=0}^{\infty} b_jX^j = \sum_{i=1}^{\infty} a_iX^{i-1}\sum_{j=0}^{\infty} b_jX^j = 0$. Next, we can obtain $a_1b_j = 0$ since $\text{char}(R/I) = 0$ for all $j$ in the same manner. Proceeding inductively, we have $a_ib_j = 0$ for all $i,j$.
    \end{itemize}
\end{proof}

By Proposition 3.12(2), one may conjecture that a ring $R$ is Armendariz of the Hurwitz series type when $R/I$ is Armendariz of the Hurwitz series type with $\text{char}(R/I) = 0$ for any nonzero proper ideal $I$ of $R$ which is Armendariz of the Hurwitz series type as a ring. However, the answer is negative as can be seen by the $2 \times 2$ upper triangular matrix ring over a reduced ring, say  $R = \begin{pmatrix}
    D & D \\
    0 & D
\end{pmatrix}$. Actually, $I = \begin{pmatrix}
    0 & D \\
    0 & 0
\end{pmatrix}$ and $R/I$ are Armendariz of the Hurwitz series type, but $R$ is non-Abelian, and so not Armendariz.

\newpage

\end{document}